\documentclass[12pt]{amsart}
\usepackage{graphicx,verbatim,amssymb,amsmath,multirow,bbold}
\usepackage[all]{xy}
\usepackage[active]{srcltx}
\usepackage{hyperref}

\DeclareSymbolFont{bbold}{U}{bbold}{m}{n}
\DeclareSymbolFontAlphabet{\mathbbold}{bbold}

\newtheorem{thm}{Theorem}[section]
\newtheorem{cor}[thm]{Corollary}
\newtheorem{lem}[thm]{Lemma}
\newtheorem{prop}[thm]{Proposition}

\theoremstyle{definition}
\newtheorem{dfn}[thm]{Definition}
\theoremstyle{remark}
\newtheorem{rem}[thm]{Remark}
\newtheorem{ex}[thm]{Example}
\numberwithin{equation}{section}

\def\Q{\mathbb{Q}}
\def\R{\mathbb{R}}

\def\Z{\mathbb{Z}}
\def\1{\mathbbold{1}}

\def\Vc{\mathcal{V}}

\def\Ts{\mathsf{T}}

\def\la{\lambda}

\def\d{\partial}
\def\0{\varnothing}
\def\sm{\setminus}

\def\le{\leqslant}
\def\ge{\geqslant}
\def\<{\langle}
\def\>{\rangle}

\def\Hom{\mathrm{Hom}}

\def\Len{\mathrm{Len}}
\def\area{\mathrm{area}}

\def\Gs{\mathsf{G}}
\def\Ts{\mathsf{T}}
\def\HG{\mathrm{HG}}
\def\Len{\mathrm{Len}}

\title{Polynomial valuations on plane polygons}

\author[A.~Khovanski\u{i}]{Askold Khovanski\u{i}}
\address[A.~Khovanski\u{i}]{Department of Mathematics, University of Toronto, 40 St. George St., Toronto, ON M5S 2E4 Canada}

\author[V.~Kiritchenko]{Valentina Kiritchenko}
\address[V.~Kiritchenko]{Institute for Information Transmission Problems\\
Bolshoy Karetny Pereulok 19, 127994 Moscow, Russia}

\author[V.~Timorin]{Vladlen Timorin}
\address[V.~Kiritchenko and V. Timorin]{Laboratory of Algebraic Geometry and Faculty of Mathematics\\
National Research University Higher School of Economics\\
Usacheva St. 6, 119048 Moscow, Russia}

\thanks{The study was implemented in the framework of the Basic Research Program at HSE University (HSE-BR-2025-84).}

\begin{document}
\maketitle

\section{Introduction}
\label{s:intro}
Two plane polygons $P$ and $Q$ are \emph{scissors congruent} if one can cut $P$ into polygonal
 pieces and assemble $Q$ of these pieces.
Implicit in this definition is a certain group $\Gs$ of Euclidean isometries
 that allows moving the pieces around.
According to the choice of $\Gs$, one can talk of \emph{Euclidean scissors congruences} (where
 $\Gs$ consists of all Euclidean isometries), \emph{translation scissors congruences} (where $\Gs$
 coincides with the group of all translations), etc.
For a general choice of $\Gs$, one talks about \emph{$\Gs$-scissors congruence}.

Euclidean scissors congruence problems trace back to Antiquity.
According to the classical \emph{Wallace--Bolyai--Gerwien theorem}, two plane polygons
 are scissors congruent by means of Euclidean isometries if and only if they have equal areas.
In fact, the same result holds true when the group of all Euclidean isometries is replaced with the much smaller subgroup
 consisting only of parallel translations and half-turn rotations, as Hadwiger and Glur showed in \cite{HG51}.

Even in this simplest context, it is clear that certain abstract properties of the area play a decisive role,
 namely, additivity and $\Gs$-invariance.
Additivity property is formalized by the notion of a valuation.
Say that a real valued function $\mu$ on the set of all convex polygons in the plane $\R^2$ is a \emph{simple valuation} if
$$
\mu(P)=\mu(P_1)+\dots +\mu(P_n),
$$
 whenever $P$ is cut into finitely many polygonal pieces $P_1$, $\dots$, $P_n$.
More formally, $P$ is the union of the polygons $P_i$, and the interiors of these polygons are pairwise disjoint.
It suffices to require that $\mu(P)=\mu(P_+)+\mu(P_-)$ each time $P$ is
 dissected by a line into two parts $P_+$ and $P_-$.
Say that $\mu$ is \emph{$\Gs$-invariant} if $\mu(gP)=\mu(P)$ for all $g\in\Gs$.
For example, the Euclidean area is a valuation on (bounded) polygons
 that is invariant under the full group of Euclidean isometries.

It is clear from the definitions that, if $P$ and $Q$ are $\Gs$-scissors congruent,
 then $\mu(P)=\mu(Q)$, for every simple $\Gs$-invariant valuation $\mu$.
The converse is also true, as was proved by Hadwiger \cite{Had13}
 (we will also review the proof in this paper, at least for special choices of $\Gs$,
 see Section \ref{s:scissor}).
Solving a scissors congruence problem now reduces to finding a ``complete'' set
 of simple $\Gs$-invariant valuations that are enough to distinguish any pair
 of ($\Gs$-scissors) non-congruent polygons.
When $\Gs$ is the full group of Euclidean isometries, just one valuation,
 namely, the area, is enough.
Consider now translation scissors congruence problem in the plane.
For this problem, one needs an additional uncountably infinite family of valuations $\HG_L$ --- so called
 \emph{Hadwiger--Glur valuations} --- defined below.

Fix an orientation of $\R^2$ once and for all.
Also, it will be convenient to fix an origin $o$ of $\R^2$ and view $\R^2$ as a vector space.
Even though this is not strictly necessary, it allows to choose
 representatives in each class of parallel lines.
Choose an oriented line $L$ through the origin and define a function $\HG_L$ on convex polygons as follows.
Given a convex polygon $P$, let $L_\pm$ be the two support lines of $P$ parallel to $L$;
 we label these lines so that $P$ lies on the left from $L_+$ and on the right of $L_-$,
 with respect to the chosen orientation of $L$ as well as the chosen orientation of $\R^2$.
Define
$$
\HG_L(P):=\Len(P\cap L_+)-\Len(P\cap L_-),
$$
 where $\Len$ denotes the Euclidean length of a straight line interval.

A solution of the plane translation scissors congruence problem found by Hadwiger and Glur in \cite{HG51}
 goes as follows: two polygons $P$, $Q$ are translation scissors congruent if and only if
 they have the same area and the same values of all Hadwiger--Glur valuations.
Observe that, even though there are uncountably many Hadwiger--Glur valuations,
 it suffices to check only finitely many relations in order to decide whether $P$ and $Q$
 are scissors congruent, namely, that $\area(P)=\area(Q)$ and $\HG_L(P)=\HG_L(Q)$,
 for every line $L\ni o$ parallel to an edge of $P$ or to an edge of $Q$.
The proof of this result in \cite{HG51} is elementary; it is based on a direct geometric construction.
General facts of linear algebra allow to deduce a complete description of all
 translation invariant simple valuations on plane polygons.
Every such valuation is a (possibly, infinite) sum of $\xi\circ\area$ and $\xi_L\circ\HG_L$,
 where the functions $\xi$, $\xi_L:\R\to\R$ are linear over $\Q$ (not necessarily linear over $\R$)
 and the summation is performed over all lines $L$ through the origin.
In other words, valuations of the form $\xi\circ\area$ and $\xi_L\circ\HG_L$ \emph{generate}
 the space of simple translation invariant valuations by means of possibly infinite linear combinations;
 these infinite linear combinations make sense since, for every polygon $P$, only
 finitely many terms take nonzero values on $P$
 (indeed, only finitely many lines though the origin are parallel to the edges of $P$).
One can also describe all relations between $\xi\circ\area$ and $\xi_L\circ\HG_L$.

In this paper, we revisit the description of all translation invariant simple valuations on polygons.
Our approach is straightforward and natural; however, we are not aware of any previous implementation of this approach.
Namely, we start by describing \emph{all} simple valuations and then impose the condition of translation invariance.
All simple valuations $\mu$ on polygons can be described in the following two ways, cf. \cite{Dup01,KKT26}:
\begin{itemize}
  \item there is a \emph{cochain} $\theta$, i.e., a function on oriented straight line intervals
   that is additive under subdivisions, such that $\mu(P)$ is obtained by ``integrating'' $\theta$ over $\d P$,
   for every polygon $P$; a cochain $\theta$ represents zero valuation if and only if
   $\theta(a,b)=g(b)-g(a)$ for some function $g:\R^2\to\R$.
  \item there is a function $f$ on \emph{flags} $(a,L)$, where $L$ is a line in $\R^2$ and $a\in L$ is a point,
   such that $\mu(P)$, for a convex polygon $P$, is obtained as the sum of $\pm f(a,L)$ over all flags of $P$ (say that $(a,L)$
   is a \emph{flag of} $P$ if $a$ is a vertex of $P$ and $L$ contains an edge of $P$); a function $f$
   represents zero valuation if and only if $f(a,L)=g(a)+h(L)$ for some functions $g$, $h$.
\end{itemize}
\emph{Any function on flags} defines a simple valuation.
More details are given in Sections \ref{s:val-coc} and \ref{s:gen-rel}, see specifically Theorems \ref{t:val-coh} and \ref{t:flag-val}.
In terms of these descriptions, we study the meaning of translation invariance,
 and thus recover classical results of Hadwiger and Glur.
Finally (and this result appears to be new), we describe the condition that $\mu$ is a polynomial
 valuation via properties of an associated cochain $\theta$ and via properties of an associated function $f$ on flags
 (say that $\mu$ is polynomial of degree $\le d$ if, for every polygon $P$, the function $x\mapsto \mu(x+P)$
 is a degree $\le d$ polynomial function).
Theorem \ref{t:polyval-descr} describes degree $d$ polynomial simple valuations in terms of
 (1) degree $d+1$ polynomial differential 1-forms over $\Q$ and (2) degree $d$ polynomial cochains.

\begin{rem}[Nonsimple valuations]
All valuations considered in this paper are simple.
Nonsimple valuations are also worth studying (e.g., the perimeter of $P$ and the sum of a certain function over
 $P\cap\Z^2$ are nonsimple valuations of a polygon $P$).
Moreover, the space of all (not necessarily simple) valuations has a nice algebraic structure
 (its predual space is almost an algebra over $\R$, see \cite{Mc89}).
To a large extent, however, properties of nonsimple valuations reduce to those of simple ones.
Indeed, the quotient space of the space of all valuations modulo simple valuations
 identifies with the product $\prod_L \mathrm{Val}(L)$, where $L$ ranges through all lines $L\subset\R^2$,
 and $\mathrm{Val}(L)$ stands for the space of all valuations on the intervals of $L$.
\end{rem}

\section{Simple valuations on a line}
\label{s:1D}
We start by overviewing a 1D version of the theory, which is straightforward.
A function $\mu$ on the compact intervals of the form $[a;b]\subset\R$ is called a
 \emph{simple valuation} (on intervals) if $\mu([a;a])=0$ for all $a\in\R$ and
$\mu([a;b])=\mu([a;c])+\mu([c;b])$
 whenever $c\in [a;b]$.
For example, the length of an interval $\Len([a;b]):=|b-a|$ is a simple valuation.
A wider class of examples is associated with an arbitrarily chosen function $f:\R\to\R$
 (no conditions whatsoever are imposed on $f$; e.g., it does not have to be continuous or measurable).
Every such $f$ defined the corresponding simple valuation $\mu_f$ defined
 by the condition $\mu_f([a;b])=f(b)-f(a)$ whenever $a\le b$.
Note that the valuation $\Len$ also has the form $\mu_f$, for $f(x)=x$.

\begin{thm}
\label{t:all-1D}
Any simple valuation on intervals has the form $\mu_f$ for some function $f:\R\to\R$.
Under the additional assumption that $f(0)=0$, a function $f$ with $\mu=\mu_f$ is unique.
In general, $\mu_f=0$ if and only if $f$ is constant.
\end{thm}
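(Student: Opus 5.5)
The plan is to construct $f$ explicitly by anchoring at the origin. Given a simple valuation $\mu$, we define
$$
f(x):=\mu([0;x])\ \text{ for } x\ge 0,\qquad f(x):=-\mu([x;0])\ \text{ for } x<0.
$$
Then $f(0)=\mu([0;0])=0$. We must check that $\mu([a;b])=f(b)-f(a)$ for all $a\le b$, and we split into three cases according to the position of $0$ relative to $[a;b]$.

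If $0\le a\le b$, additivity with the point $c=a\in[0;b]$ gives $\mu([0;b])=\mu([0;a])+\mu([a;b])$, which is exactly $f(b)=f(a)+\mu([a;b])$. If $a\le b\le 0$, we cut $[a;0]$ at $c=b$ to get $\mu([a;0])=\mu([a;b])+\mu([b;0])$, that is, $-f(a)=\mu([a;b])-f(b)$. If $a<0<b$, we cut $[a;b]$ at $c=0$ to get $\mu([a;b])=\mu([a;0])+\mu([0;b])=-f(a)+f(b)$. The main work, modest as it is, lies in this case bookkeeping: making sure the sign convention for negative $x$ matches the additivity identity in each case. The degenerate endpoints are covered because $\mu([a;a])=0$.

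For the remaining statements, we first observe that if $f$ is constant, then clearly $\mu_f=0$. Conversely, if $\mu_f=0$, then $f(b)=f(a)$ for all $a\le b$. Since any two reals are comparable, $f$ is constant.

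Uniqueness then follows. If $\mu_f=\mu_g$, then $\mu_{f-g}=\mu_f-\mu_g=0$, because $f\mapsto\mu_f$ is visibly linear. Hence $f-g$ is constant, and the normalization $f(0)=g(0)=0$ forces $f=g$. No earlier results are needed.
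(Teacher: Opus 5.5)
Your proof is correct and follows the paper's argument: it uses the same definition of $f$ anchored at $0$, and it writes out all three position cases, where the paper does only $a\le 0\le b$ and leaves the rest to the reader. The only difference is the order of the last part: you prove directly that $\mu_f=0$ forces $f$ to be constant (the paper's ``alternatively'' remark) and then get uniqueness by linearity, while the paper first recovers $f$ from $\mu_f$ via $f(x)=\mu_f([0;x])$ and then derives the constancy statement from that.
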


\begin{proof}
Let $\mu$ be a simple valuation on intervals.
Set $f(x)=\mu([0;x])$ if $x\ge 0$ and $f(x)=-\mu([x;0])$ otherwise.
Now, if $a\le 0\le b$, then
$$
\mu([a;b])=\mu([a;0])+\mu([0;b])=-f(a)+f(b)=\mu_f([a;b]),
$$
 by definition of $f$.
Other possible locations of $[a;b]$ relative to the point $0$
 yield the same result, through similar computations;
 these computations are left to the reader.
We see that $\mu$ has the form $\mu_f$.

If $\mu=\mu_f$ and $f(0)=0$, then $f$ can be uniquely recovered from $\mu$:
 indeed, $f(x)=\mu_f([0;x])$ for $x\ge 0$ by definition of $\mu_f$,
 and $f(x)=-\mu_f([x;0])$ for $x\le 0$.
Assume now that $\mu_f=0$ without imposing the normalization $f(0)=0$.
Then the function $g(x):=f(x)-f(0)$ satisfies this normalization together with
 the condition $\mu_g=0$.
Hence, $g=0$, which means that $g$ is the constant function $f(0)$.
Alternatively, $\mu_f=0$ means $f(b)=f(a)$ whenever $a\le b$,
 which also immediately implies that $f$ is constant.
\end{proof}

Let $\Ts$ be the group of all translations of $\R$.
One can identify $\Ts$ with $\R$.
Under this identification, $v\in\Ts$ acts as the parallel translation $x\mapsto x+v$,
 in particular, it sends an interval $[a;b]$ to $[a+v,b+v]$.
A valuation $\mu$ on intervals is said to be \emph{translation invariant} if
 it is stable under the action of $\Ts$, that is, if $\mu([a+v,b+v])=\mu([a;b])$,
 for all $a$, $b$, $v\in\R$.
Below is a characterization of all translation invariant simple valuations on intervals.

\begin{thm}
\label{t:Tinv-1D}
Every translation invariant simple valuation on intervals has the form $\mu_f$,
 for a unique $\Q$-linear function $f$.
The same valuation can also be represented as $f\circ \Len$.
\end{thm}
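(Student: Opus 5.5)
By Theorem \ref{t:all-1D}, a translation invariant simple valuation $\mu$ has the form $\mu_f$ for a unique $f$ with $f(0)=0$. The plan is to translate translation invariance into a functional equation for this normalized $f$, solve it, and then read off both uniqueness and the formula $f\circ\Len$.

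First, for $a\le b$ and any $v\in\R$, invariance gives
$$
f(b+v)-f(a+v)=f(b)-f(a).
$$
Taking $a=0$ and $b=x\ge 0$, and using $f(0)=0$, we get $f(x+v)=f(x)+f(v)$ for all $x\ge0$ and all $v\in\R$. For $x<0$ we apply the same identity to the interval $[x;0]$: this gives $f(v)-f(x+v)=f(0)-f(x)=-f(x)$, which is the same relation. So $f$ satisfies Cauchy's equation $f(x+y)=f(x)+f(y)$ on all of $\R$.

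Next, additivity implies $f(nx)=nf(x)$ for $n\in\Z$, by induction together with $f(-x)=-f(x)$, which itself follows from $f(0)=0$. Writing $q=m/n$, we have $nf(qx)=f(mx)=mf(x)$, hence $f(qx)=qf(x)$. Thus $f$ is $\Q$-linear. Conversely, any $\Q$-linear $f$ gives a translation invariant $\mu_f$, since then $f(b+v)-f(a+v)=f(b)-f(a)$ trivially.

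For uniqueness, any $\Q$-linear $f$ satisfies $f(0)=0$, so the uniqueness clause of Theorem \ref{t:all-1D} applies and gives a unique such $f$. Finally, for $a\le b$ we have
$$
\mu_f([a;b])=f(b)-f(a)=f(b-a)=f(\Len([a;b])),
$$
which is the representation $f\circ\Len$.

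None of these steps is a real obstacle. The only point needing care is deriving Cauchy's equation on the whole line from the interval identity, which is stated only for $a\le b$; this is why the cases $x\ge0$ and $x<0$ are treated separately above.
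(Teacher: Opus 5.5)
Your proof is correct and takes essentially the same route as the paper. Both normalize $f$ via Theorem \ref{t:all-1D}, derive Cauchy's equation from translation invariance, and conclude $\Q$-linearity, uniqueness and the formula $f\circ\Len$; your separate treatment of $x<0$ fills in a sign case that the paper's one-line computation $f(x+y)=\mu([0;x+y])=\mu([0;x])+\mu([x;x+y])$ leaves implicit.
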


\begin{proof}
Consider a translation invariant simple valuation $\mu$ on intervals.
By Theorem \ref{t:all-1D}, there is a unique function $f:\R\to\R$ such that $f(0)=0$ and $\mu_f=\mu$.
We claim that $f$ is \emph{additive}: it satisfies the identity $f(x+y)=f(x)+f(y)$ for all $x$, $y\in\R$.
It is straightforward and well known that additive functions are the same as $\Q$-linear functions.
Additivity of $f$ follows from additivity and translation invariance of $\mu$:
$$
f(x+y)=\mu([0,x+y])=\mu([0;x])+\mu([x;x+y])=f(x)+f(y).
$$
Here, the first equality is the definition of $\mu_f=\mu$, the second equality is
 additivity $\mu$ (a consequence of $\mu$ being a simple valuation),
 and the last equality is again the definition of $\mu_f$ combined with
 translation invariance (namely, $\mu([x;x+y])=\mu([0;y])$).

The last claim of the theorem is immediate from the observation that
 $\Len([0;x])=x$ and $\mu([0;x])=f(x)$ for $x\ge 0$.
\end{proof}

We see that any translation invariant simple valuation on intervals can be
 expressed as an additive function of the length.
This is closely related to the obvious solution of the 1D translation scissors congruence problem:
 \emph{two intervals in $\R$ are $\Ts$-scissors congruent if and only if they have the same length}.
A more general class than translation invariant valuations are polynomial valuations.
In order to define them, we first recall the notion of a polynomial function
 (all our polynomial functions are over $\Q$, not over $\R$).

\begin{dfn}[Polynomial functions]
\label{d:poly}
Let $A$ be an abelian group, and consider a function $f:A\to\R$.
Say that $f$ is a \emph{polynomial function of degree $\le 0$} if it is constant.
By induction on $d\in\Z_{>0}$, define a \emph{polynomial function of degree $\le d$} as
  a function $f:A\to\Q$ such that, for each $a\in A$, the function $\Delta_af(x):=f(x+a)-f(x)$
  is a polynomial  function of degree $\le d-1$.
General properties of polynomial functions on abelian groups, and even abelian semigroups,
 are discussed in \cite{Kho25}.
\end{dfn}

Any polynomial function can be represented as a sum of homogeneous (over $\Q$)
 components, as is well known.
It will be useful in the sequel that two difference operators commute:
$$
\Delta_a\Delta_b f=\Delta_b\Delta_a f,
$$
 for any function $f:A\to\R$, as a straightforward computation yields.
For example, if $f$ is a polynomial of degree $\le 1$, then $\Delta_af(x)$ is a constant, for each $a\in\R$.
Assume additionally that $f(0)=0$.
Then
$$
f(x+a)-f(x)=\Delta_a f(x)=\Delta_a f(0)=f(a),\quad \forall x,a\in\R,
$$
 which means that $f$ is \emph{additive}, i.e, $\Q$-linear.
Conversely, $f$ being additive implies that $f+c$ is a polynomial of degree $\le 1$,
 for every $c\in\R$.
In particular, polynomial functions $f:\R\to\R$ do not have to be continuous,
 according to our chosen definition.

\begin{dfn}[Polynomial valuations]
Let $\mu$ be a simple valuation on intervals.
Say that $\mu$ is \emph{polynomial of degree $\le d$} if, for every interval $I$,
 the function $v\mapsto \mu(I+v)$ is a polynomial function of degree $\le d$.
\end{dfn}

Clearly, the same definition also applies to simple valuations on plane polygons;
 we will consider polynomial valuations on polygons later on.

\begin{thm}
\label{t:poly-1D}
Any degree $\le d$ polynomial simple valuation on intervals has the form $\mu_f$,
 for some polynomial function $f$ of degree $\le d+1$.
Conversely, if $f$ has degree $\le d+1$, then $\mu_f$ is a polynomial degree $\le d$ valuation.
\end{thm}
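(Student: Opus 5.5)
By Theorem \ref{t:all-1D}, every simple valuation $\mu$ on intervals has the form $\mu_f$ with $f(0)=0$. So the statement reduces to one characterization: $\mu_f$ is polynomial of degree $\le d$ if and only if $f$ is polynomial of degree $\le d+1$. Throughout we use the difference operators $\Delta_a$ from Definition \ref{d:poly}.

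\textbf{Converse direction.} Suppose $f$ has degree $\le d+1$, and fix an interval $I=[a;b]$ with $a\le b$. Then
$$
\mu_f(I+v)=f(b+v)-f(a+v)=f(a+(b-a)+v)-f(a+v)=\Delta_{b-a}f(a+v).
$$
By definition, $\Delta_{b-a}f$ is polynomial of degree $\le d$. Precomposing with the translation $v\mapsto a+v$ preserves degree, since $\Delta_c$ commutes with translations. Hence $v\mapsto\mu_f(I+v)$ has degree $\le d$.

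\textbf{Direct direction.} Suppose $\mu=\mu_f$ is polynomial of degree $\le d$. We must show that $\Delta_c f$ has degree $\le d$ for every $c\in\R$.

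If $c\ge 0$, the identity above with $a=0$, $b=c$ gives
$$
\Delta_c f(v)=\mu([0;c]+v).
$$
This is polynomial of degree $\le d$ in $v$ by hypothesis.

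If $c<0$, we use instead
$$
\Delta_c f(v)=f(v+c)-f(v)=-\mu([c;0]+v).
$$
This is again polynomial of degree $\le d$, since the negative of a polynomial of degree $\le d$ is one (by an immediate induction on $d$).

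For $d\ge 0$ this shows that $f$ is polynomial of degree $\le d+1$, by the inductive clause of Definition \ref{d:poly}.

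\textbf{Main obstacle.} The only delicate point is bookkeeping in Definition \ref{d:poly}, which the argument invokes in two places.

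First, it relies on the fact that translating the argument, $g(x)\mapsto g(x+a)$, preserves the degree. This follows by induction on the degree from $\Delta_c(g(\cdot+a))=(\Delta_c g)(\cdot+a)$.

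Second, the case $d=0$ must be handled consistently with the base case. There, "degree $\le 0$" means constant, and the claim becomes that $\mu_f$ is translation invariant exactly when $f$ has degree $\le 1$. This is consistent with Theorem \ref{t:Tinv-1D}: $f$ with $f(0)=0$ is additive, and "$f$ additive plus a constant" is exactly "degree $\le 1$", as remarked after Definition \ref{d:poly}.

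Everything else is the one-line identity $\mu_f([a;b]+v)=\Delta_{b-a}f(a+v)$.
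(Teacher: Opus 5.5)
Your proof is correct and follows essentially the same route as the paper. You reduce to $\mu=\mu_f$ via Theorem \ref{t:all-1D}, then read off $\Delta_c f(v)=\pm\mu(I+v)$ with $I=[0;c]$ or $[c;0]$. The only difference is cosmetic and lies in the converse: the paper writes $\mu_f(I+v)=\Delta_b f(v)-\Delta_a f(v)$, which needs only closure of degree $\le d$ polynomials under subtraction, instead of your appeal to translation invariance of degree.
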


\begin{proof}
Start with the last claim of the theorem.
Let $f$ be a polynomial function of degree $\le d+1$, and consider $\mu_f$.
For every interval $I=[a;b]$ with $a\le b$, one has
$$
\mu_f(I+v)=f(b+v)-f(a+v)=\Delta_{b}f(v)-\Delta_a f(v),
$$
 which is by definition a degree $\le d$ polynomial function of $v$.
Hence, $\mu_f$ is a degree $\le d$ polynomial valuation, as claimed.

Now consider a degree $\le d$ polynomial valuation $\mu$.
By Theorem \ref{t:all-1D}, it has the form $\mu=\mu_f$ for some function $f:\R\to\R$
 with the property $f(0)=0$.
One has
$$
\Delta_a f(x)=f(x+a)-f(x)=\mu([x;x+a])=\mu([0;a]+x),
$$
 for all $x\in\R$ and $a\in\R_{\ge 0}$.
Fixing $a\ge 0$, we see that $\Delta_a f(x)$ is a degree $\le d$ polynomial function of $x$.
Similar computations yield the same conclusion for $a<0$.
It follows that $f$ is a polynomial function of degree $\le d+1$.
\end{proof}

Theorem \ref{t:poly-1D} can be restated as a canonical isomorphism
 between degree $\le d$ polynomial simple valuations on intervals
 and degree $\le d+1$ polynomial functions, modulo constants.

\section{Valuations and cochains}
\label{s:val-coc}
Even though \emph{cochains} appear in every cohomology theory, we will use the name
 only in one specific context connected with valuation theory.
The same applies to the term \emph{coboundary}. 

\begin{dfn}[Cochains and coboundaries]
A \emph{cochain} is a function $\theta:\R^2\times\R^2\to\R$
 with the property that $\theta(a,b)+\theta(b,c)=\theta(a,c)$ for all triples of \textbf{collinear} points $a$, $b$, $c\in\R^2$.
Given a function $g:\R^2\to\R$, define the cochain $dg$ by the formula $dg(a,b)=g(b)-g(a)$;
 call $dg$ a \emph{coboundary}.
It is straightforward that $dg$ is indeed a cochain.
Thus, the space of coboundaries is a vector subspace in the vector space of all cochains.
\end{dfn}

One can think of a cochain $\theta$ as a function $[a;b]\to \theta(a,b)$ on oriented intervals $[a;b]$
 that is additive under subdivision.
Some basic properties of cochains are immediate from the definition.
For example, $\theta(a,a)+\theta(a,a)=\theta(a,a)$ implies that $\theta(a,a)=0$ for every $a\in\R^2$.
Next, the \emph{cochain condition}
$$
\theta(a,b)+\theta(b,a)=\theta(a,a)=0
$$
 implies that $\theta$ is skew symmetric in its two arguments.
In other words, $\theta$ can be viewed as a function on oriented straight line segments
 that vanishes on degenerate (=dimension 0) segments and that changes sign
 whenever the orientation of a segment is reversed.

One can integrate cochains over 1-chains.
A \emph{1-chain} in $\R^2$ can be defined as an integer linear combination of
 oriented straight line segments, i.e., as a formal sum of the form
$$
c=m_1 [a_1;b_1] +\dots + m_k[a_k;b_k],
$$
 where $a_i\ne b_i$ and $m_i\in\Z$, for all $i=1$, $\dots$, $k$.
Here, $[a;b]$ is the straight line segment connecting the points $a$, $b$
 and oriented from $a$ to $b$.
Clearly, 1-chains form an abelian group under addition.

\begin{dfn}[Integrals]
Consider a cycle $\theta$ and a 1-chain $c$ as above.
Set
$$
\int_c \theta := m_1\theta(a_1,b_1)+\dots +m_k\theta(a_k,b_k).
$$
 and call this number the \emph{integral of $\theta$ over $c$}.
\end{dfn}

It follows from the cochain condition that $\int_c\theta=0$ for
 every cochain $\theta$ and every 1-chain $c$ from the subgroup
 generated by 1-chains of the form $[a;b]-[a;c]-[c;b]$ for $c\in [a;b]$.
Call this subgroup the \emph{subgroup of null chains}.
Vanishing of the integral of $\theta$ over all null chains can also be restated
 as the invariance of the integral under subdivisions.
Since null chains do not change the integral, they will be often ignored.

For every convex polygon $P$ in the plane, let $\d P$ be the 1-chain
 obtained  as the sum of all edges of $P$ oriented so that $P$ is on their left
 (more formally: the orientation of $\d P$ matches that of the boundary of $P$
 as an oriented manifold with the boundary).
With every cochain $\theta$, one associates the valuation $S\theta$ as follows:
 for a convex polygon $P$, one has
$$
S\theta(P)=\int_{\d P} \theta.
$$
It is indeed straightforward to verify that $S\theta$ is a simple valuation.

Fix two distinct lines $L_0$, $L_1$ through the origin.
Visualize them as the horizontal and the vertical coordinate axes, respectively.
A cochain $\theta$ is said to be \emph{normalized} if $\theta(a,b)=0$ whenever
 $b\in a+L_1$ and whenever $a$, $b\in L_0$.
In other words, a normalized $\theta$ vanishes on every vertical line
 as well as on the horizontal axis $L_0$.
Known descriptions of simple valuations
 are equivalent in the 2D case to the next theorem, even though they are usually formulated differently.

\begin{thm}
  \label{t:val-coc}
Every simple valuation on plane polygons has the form $S\theta$ for a
 unique normalized cochain $\theta$.
\end{thm}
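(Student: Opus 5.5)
The plan is to build $\theta$ explicitly from $\mu$, using trapezoids that reach down to the horizontal axis $L_0$, and then check the cochain property, the identity $S\theta=\mu$, and uniqueness.

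\emph{Construction.} Work in coordinates with $L_0$ horizontal and $L_1$ vertical. For a non-vertical oriented segment $[a;b]$, let $T(a,b)$ be the (possibly degenerate, possibly self-crossing if $[a;b]$ meets $L_0$) region between $[a;b]$ and its vertical projection onto $L_0$. Define $\theta(a,b)$ as the signed value of $\mu$ on this trapezoid. The sign is $+$ when the trapezoid lies below $[a;b]$ and $a$ is to the left of $b$, and it flips under reversing orientation or reflecting across $L_0$. If $[a;b]$ crosses $L_0$, cut it at the crossing point and add the two triangle contributions with their signs. On vertical segments set $\theta=0$, and also $\theta=0$ on segments of $L_0$. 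Then $\theta$ is normalized by construction.

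\emph{Cochain property.} For collinear $a,b,c$ with $b\in[a;c]$, the trapezoid over $[a;c]$ is cut by the vertical line through $b$ into the trapezoids over $[a;b]$ and $[b;c]$. Simplicity of $\mu$ then gives $\theta(a,b)+\theta(b,c)=\theta(a,c)$. Other orders of $a,b,c$ follow from skew-symmetry, which is built into the definition.

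\emph{The identity $S\theta=\mu$.} For a convex polygon $P$ lying strictly above $L_0$, the chain $\d P$ traversed counterclockwise runs right-to-left along the top and left-to-right along the bottom. The signed trapezoid contributions therefore give $\mu$(region under the top) $-\mu$(region under the bottom), and the vertical edges contribute $0$. Since the region under the top is the union of $P$ and the region under the bottom with disjoint interiors, additivity yields $\mu(P)$. To get this difference form we need $\mu$ on nonconvex-free pieces only: the region under the upper boundary chain is a union of convex trapezoids, and its value is well defined by additivity, because a simple valuation extends to polygonal regions cut into convex pieces independently of the dissection (common refinement by lines). For general $P$, cut $P$ by $L_0$ and handle the parts below $L_0$ symmetrically. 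Both $S\theta$ and $\mu$ are additive under this cut, and the new edge along $L_0$ contributes $\theta=0$ from both sides anyway.

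\emph{Uniqueness.} Suppose a normalized $\theta$ has $S\theta=0$. For a non-vertical segment $[a;b]$ above $L_0$, the boundary of $T(a,b)$ consists of $[a;b]$ (suitably oriented), two vertical edges and a segment of $L_0$. All but $[a;b]$ carry $\theta=0$, so $\theta(a,b)=\pm S\theta(T)=0$. Segments crossing $L_0$ reduce to this case via the cochain property.

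\emph{Main obstacle.} The step I expect to be hardest is the bookkeeping of orientations and signs when segments cross $L_0$ or $P$ straddles $L_0$. I would handle it by defining $\theta$ first on segments in the closed upper and lower half-planes separately and extending by subdivision, which the cochain axiom forces.
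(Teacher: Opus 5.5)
Your proposal is correct and follows essentially the same route as the paper: you define $\theta$ by signed values of $\mu$ on coordinate trapezoids reaching down to $L_0$ and extend it to segments crossing $L_0$ by subdivision. As in the paper, $S\theta=\mu$ then comes from writing $P$ (after cutting along $L_0$) as a signed union of these trapezoids, and uniqueness comes from $S\theta(T_{a,b})=\pm\theta(a,b)$ for normalized $\theta$.

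One small fix is needed. Your sign rule as literally stated ($+$ when the trapezoid lies below and $a$ is left of $b$) gives $S\theta=-\mu$: for $\mu=\area$ and the square $[0,1]\times[1,2]$ you get $1-2=-1$. Take the sign to be $+$ exactly when $[a;b]$ runs along $\d T(a,b)$, as the paper does; that is the convention your top-minus-bottom computation actually uses.
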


\begin{proof}
Let $\mu$ be a simple valuation on plane polygons; one needs to find a normalized cochain $\theta$ such that $\mu=S\theta$.
Consider an oriented segment $I=[a;b]$; we want to define $\theta(a,b)$.
Assuming that $I$ is not vertical and that $I$ is disjoint from $L_0$ except possibly at an endpoint,
 form a trapezoid $T_{a,b}$ with two vertical edges, one edge in $L_0$, and the remaining edge coinciding with $I$.
Such a trapezoid $T_{a,b}$ is uniquely defined by $a$ and $b$; it will be called a \emph{coordinate trapezoid};
 see Fig. \ref{fig:tra1}, left.
Define $\theta(a,b)$ as $\pm \mu(T_{a,b})$, where the sign is plus or minus
 depending on whether the orientation of $I$ (from $a$ to $b$) corresponds to the orientation of $\d T_{a,b}$.
If $I$ intersects $L_0$ at a point $c$, then set $\theta(a,b):=\theta(a,c)+\theta(c,b)$, where
 the terms in the right-hand side have already been defined.
Finally, set $\theta(a,b)=0$ if either $I$ is vertical or $I\subset L_0$.
By now, we have defined a cochain $\theta$ using the values of $\mu$ on coordinate trapezoids.
It is straightforward to verify that $\theta$ is indeed a cochain; this follows from the additivity of $\mu$.

\begin{figure}
  \centering
  \includegraphics[width=.8\textwidth]{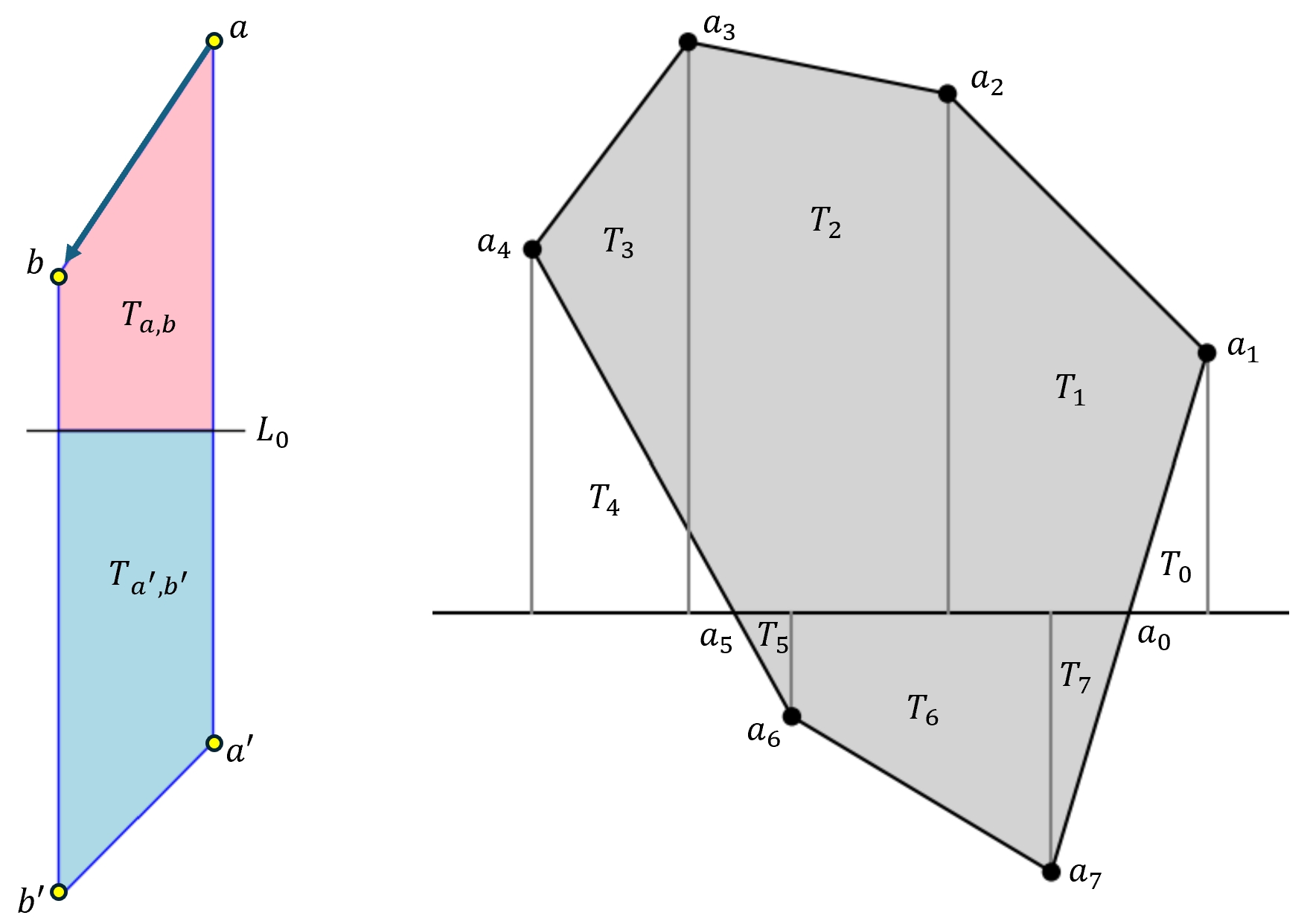}
  \caption{\small Left: a coordinate trapezoid $T_{a,b}$.
  Note that $\theta(a,b)$ is by definition $\mu(T_{a,b})$.
  On the other hand, $\theta(a',b')=-\mu(T_{a',b'})$.
Right: a convex polygon $P$ with vertices $a_0$, $\dots$, $a_7$,
 where $a_0$ and $a_5$ are ``artificial'' vertices marked at the intersections of $\d P$ with $L_0$.
 For every simple valuation $\mu$, the value $\mu(P)$ can be computed as the sum of
 $\pm\mu(T_i)$, where $i=0$, $\dots$, $7$, and the sign is minus for $T_0$ and $T_4$
 (it is plus for all $T_i$ with $i\ne 0,4$).
}
\label{fig:tra1}
\end{figure}

Now consider a convex polygon $P$ with vertices $a_0$, $\dots$, $a_{n-1}$, listed in the counterclockwise order.
Somewhat artificially, we add (at most two) points, where the boundary of $P$ intersects $L_0$, to the list of vertices.
Then
$$
S\theta(P)=\int_{\d P}\theta=\sum_{i=0}^{n-1}\theta(a_i,a_{i+1}).
$$
Here, the index $i$ is viewed as an element of $\Z/n\Z$;
 the first equality is the definition of $\d P$, and the second equality is the definition of the integral.
By the above $\theta(a_i,a_{i+1})$ equals $\pm\mu(T_i)$, where $T_i=T_{a_i,a_{i+1}}$.
On the other hand, $P$ is represented as a set-theoretic union or difference of the
 coordinate trapezoids $T_i$ so that the sign in the equality $\theta(a_i,a_{i+1})=\pm\mu(T_i)$
 is plus or minus according to whether one needs to add or subtract $T_i$ in
 the just described representation of $P$.
It follows from the additivity of $\mu$ that $S\theta(P)=\mu(P)$.

What remains to prove is that $S\theta=0$ for a normalized $\theta$ implies $\theta=0$.
Indeed, due to normalization conditions, one has to have
$$
S\theta(T_{a,b})=\pm\theta(a,b)
$$
 for every coordinate trapezoid $T_{a,b}$, where the sign is plus or minus according to
 whether the direction from $a$ to $b$ matches the orientation of the boundary of $T_{a,b}$.
By the above formula, $S\theta=0$ implies $\theta(a,b)=0$ at least for all $[a;b]$
 that do not cross $L_0$; it follows that $\theta(a,b)=0$ for all $a$, $b\in \R^2$ by additivity.
\end{proof}

For example, the area valuation is obtained from a certain cochain.

\begin{ex}
\label{ex:area-tha}
Define a cochain $\theta^{\area}$ as
$$
\theta^{\area}(a,b):=\int_{a}^{b} x_0dx_1,
$$
 where $(x_0,x_1)$ is a Euclidean coordinate system with coordinate axes $L_0$, $L_1$.
Then, as is easily seen, the area valuation coincides with $S\theta^{\area}$.
\end{ex}

On the one hand, Theorem \ref{t:val-coc} associates a unique cochain with each simple valuation.
On the other hand, the construction involves arbitrary choices, namely,
 the two lines $L_0$ and $L_1$ can be chosen arbitrarily.
A more invariant description of simple valuations goes as follows.

\begin{thm}
  \label{t:val-coh}
The map $\theta\mapsto S\theta$ from the space of cochains to the space of simple valuations
 is an epimorphism of $\R$-vector spaces whose kernel consists precisely of all coboundaries.
\end{thm}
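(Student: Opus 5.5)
Surjectivity and linearity come almost for free from what is already proved. The map $\theta\mapsto S\theta$ is $\R$-linear, since the integral over the fixed 1-chain $\d P$ is linear in $\theta$. By Theorem \ref{t:val-coc}, every simple valuation equals $S\theta$ for some (normalized) cochain $\theta$, so the map is onto. It remains to identify the kernel.

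\emph{Coboundaries lie in the kernel.} If $\theta=dg$, then for a convex polygon $P$ with vertices $a_0,\dots,a_{n-1}$ in counterclockwise order we get
$$
S(dg)(P)=\sum_{i\in\Z/n\Z}\bigl(g(a_{i+1})-g(a_i)\bigr)=0,
$$
because the sum telescopes around the closed boundary.

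\emph{Every cochain in the kernel is a coboundary.} Let $S\theta=0$. The plan is to subtract a suitable coboundary $dg$ so that $\theta-dg$ becomes normalized; the uniqueness part of Theorem \ref{t:val-coc} then forces $\theta-dg=0$. We construct $g$ as follows.
\begin{itemize}
\item Choose a point $p(a)$ for each $a\in\R^2$: the intersection of the vertical line $a+L_1$ with $L_0$.
\item Set $g(a):=\theta(o,p(a))+\theta(p(a),a)$, where $o$ is the origin. Each term is well defined, since $o,p(a)\in L_0$ and $p(a),a$ lie on a common vertical line.
\end{itemize}

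Now check the normalization conditions for $\theta':=\theta-dg$.
\begin{itemize}
\item If $a,b\in L_0$, then $p(a)=a$ and $p(b)=b$. So $g(b)-g(a)=\theta(o,b)-\theta(o,a)=\theta(a,b)$, using the cochain condition for the collinear points $o,a,b$ on $L_0$.
\item If $b\in a+L_1$, then $p(a)=p(b)=:p$. So $g(b)-g(a)=\theta(p,b)-\theta(p,a)=\theta(a,b)$, using the cochain condition for the collinear points $p,a,b$ on the vertical line.
\end{itemize}
Hence $\theta'$ is normalized. Since $S\theta'=S\theta-S(dg)=0$ by the first part, the uniqueness in Theorem \ref{t:val-coc} gives $\theta'=0$, that is, $\theta=dg$.

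The only step needing care is this construction of $g$. The point is that normalization can be achieved by coboundaries alone: $L_0$ together with the vertical lines form a connected ``comb'' along which $\theta$ can be integrated path-independently, because each piece lies on a single line. Once that is in place, the rest follows directly from Theorem \ref{t:val-coc}.
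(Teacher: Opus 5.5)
Your proof is correct and follows the paper's argument. You subtract the coboundary of a function $g$ obtained by integrating $\theta$ along an L-shaped path from the origin, observe that $\theta-dg$ is normalized, and invoke the uniqueness part of Theorem \ref{t:val-coc}. The only difference is the order of the path: you go along $L_0$ first and then vertically, which gives exactly the stated normalization with no further input. The paper goes vertically first and then horizontally, which gives the normalization with the roles of $L_0$ and $L_1$ interchanged; this is harmless, since Theorem \ref{t:val-coc} holds for any pair of distinct lines.
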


\begin{proof}
Clearly, $S(dg)=0$, for every function $g:\R^2\to\R$.
Suppose now that $S\theta=0$ for some cochain $\theta$, and we want to prove that $\theta$ is a coboundary.
Fix a linear coordinate system $(x_0,x_1)$ on $\R^2$, and let $L_0$, $L_1$ be the corresponding
 coordinate axes given by $\{x_1=0\}$, $\{x_0=1\}$, respectively.
Define the function $g:\R^2\to\R$ by the formula
$$
g(x_0,x_1)=\theta((0,0),(0,x_1)) + \theta((0,x_1),(x_0,x_1)).
$$
In other words, the value of $g$ at the point $(x_0,x_1)$ is by definition the integral of $\theta$ over
 the $\Gamma$-shaped broken line connecting the origin with $(x_0,x_1)$
 via a vertical segment and then a horizontal segment.
We claim that the cochain $\theta-dg$ is normalized, as follows immediately from the definition of $g$.
By Theorem \ref{t:val-coc}, the cochain $\theta-dg$ being normalized and satisfying $S(\theta-dg)=0$
 implies that $\theta-dg=0$.
Hence $\theta$ is a coboundary, as claimed.
\end{proof}

Choose some line $L$ in the plane.
Define the \emph{group of $L$-chains} as the subgroup of the group of 1-chains
 generated by oriented straight line segments parallel to $L$.
Given a cochain $\theta$, let $\theta^L$ be the cochain whose values
 on all $L$-chains coincides with the values of $\theta$ and that
 is zero on any straight line segment not parallel to $L$.
Every cochain $\theta$ can be written as
$$
\theta=\sum_L \theta^L,
$$
where the sum is over all lines through the origin.
Note that the infinite sum in the right hand side makes sense since,
 for every polygon $P$, only finitely many terms of this sum take nonzero value on $P$.
Defining a cochain $\theta$ is equivalent to defining a family of cochains $\theta^L$
 labeled by lines $L$ through the origin.

\section{Generators and relations}
\label{s:gen-rel}
We can now describe the space of simple valuations on plane polygons in terms of generators and relations.
Generators correspond to \emph{flags}, i.e., pairs of the form $(a,L)$,
 where $L$ is an affine line in $\R^2$, and $a\in L$ is a point. 
The \emph{flag valuation} $\Upsilon_{a,L}$ takes value 0 on a convex polygon $P$
 unless $a$ is a vertex of $P$ and $L$ contains an edge of $P$.
In the latter case, $\Upsilon_{a,L}(P)=\pm 1$ depending on whether the rotation around $a$
 from $P\cap L$ towards the interior of $P$ is positive or negative with respect to the chosen orientation of $\R^2$,
  see Fig. \ref{fig:Ups}.
Given any function $f$ on flags, consider the sum
$$
\mu_f=\sum_{(a,L)} f(a,L)\Upsilon_{a,L},
$$
over all flags $(a,L)$.
This formally infinite sum makes sense since only finitely many terms may attain
 nonzero values on any given polygon.
Below is a description of simple valuations given in \cite{KKT26}; it can
 also be deduced from \cite{Dup01}.

\begin{figure}
  \centering
  \includegraphics[height=5cm]{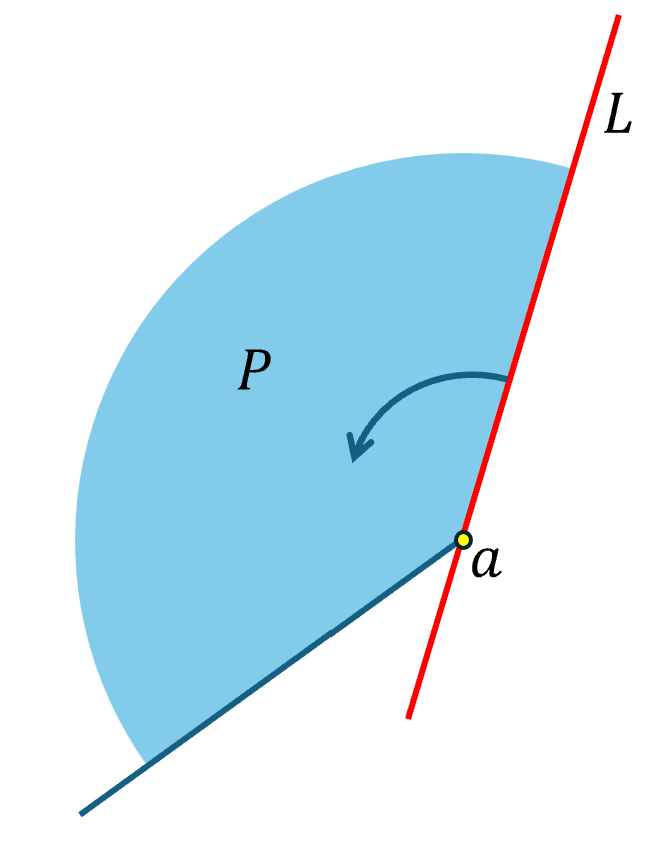}
  \caption{\small The shaded region represents a part of a convex polygon $P$ near its vertex $a$.
  The line $L$ is a support line of $P$ containing $a$. In the figure, $\Upsilon_{a,L}(P)=+1$
  since, as $L$ rotates around $a$ towards $P$ (as the arrow shows), it rotates in the positive direction.}
  \label{fig:Ups}
\end{figure}

\begin{thm}
  \label{t:flag-val}
Any simple valuation on plane polygons has the form $\mu_f$, for some function $f$ on flags.
Moreover, $\mu_f=0$ if and only if $f$ can be represented as $f(a,L)=g(a) + h(L)$,
 for some functions $g$ and $h$, so that $g(a)$ depends only on $a$ and $h(L)$ depends only on $L$.
\end{thm}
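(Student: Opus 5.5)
The plan is to go through the cochain description of Theorem \ref{t:val-coh}. First I would show that every flag valuation is itself of the form $S\theta$. Fix a flag $(a,L)$ and choose an orientation of $L$. On $L$, let $\theta_{a,L}(x,y)$ be the signed count of crossings of $a$: it is $+1$ if the oriented segment $[x;y]\subset L$ passes from the negative side of $a$ to the positive side, $-1$ in the opposite case, and $0$ if the segment stays on one side. On every segment not contained in $L$, set $\theta_{a,L}=0$. We need a convention for segments ending exactly at $a$; the natural one is a half-integer, $\theta_{a,L}(x,a)=\pm\tfrac12$. Alternatively, it is the indicator of $a$ lying in the half-open segment $(x;y]$ with the appropriate sign. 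Either way, $\theta_{a,L}$ is a cochain, because it is the 1D valuation $\mu_\phi$ of Theorem \ref{t:all-1D} for a step function $\phi$ on $L$. I would then check that $S\theta_{a,L}$ agrees with $\Upsilon_{a,L}$, up to adding something of the form $S(dg)=0$ or a multiple of a valuation depending only on $L$. To do this, compute $\int_{\d P}\theta_{a,L}$. It is nonzero only when an edge of $P$ lies in $L$. The edge $P\cap L$ is traversed in the direction fixed by $\d P$, and the contribution depends on whether $a$ is an endpoint of the edge or an interior point. With the half-open convention, the contribution is the indicator that $a$ lies in the half-open edge, with a sign. Subtracting the indicator of a single endpoint gives exactly $\pm\Upsilon$ at both endpoints. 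With a suitable normalization I expect $\Upsilon_{a,L}=S\theta$ for an explicit step cochain. This shows each $\mu_f$ is a simple valuation.

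Next I would prove surjectivity. By Theorem \ref{t:val-coh}, it suffices to write an arbitrary cochain $\theta$ as a sum of flag cochains plus a coboundary. Use the decomposition $\theta=\sum_L\theta^L$ over directions. On each affine line $L'$, Theorem \ref{t:all-1D} writes $\theta|_{L'}=d\phi_{L'}$ for a function $\phi_{L'}$ on $L'$. Then set $f(a,L'):=\phi_{L'}(a)$. The value of $\mu_f(P)$ is the sum over edges $[a;b]$ of $\pm(f(b,L')-f(a,L'))$, with the signs matching the orientation of $\d P$. This should reproduce $\int_{\d P}\theta=S\theta(P)$. The sign bookkeeping here is checked against Fig.~\ref{fig:Ups}: at the head of an oriented edge, the rotation from the edge into $P$ is positive, and at the tail it is negative, or the reverse. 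Either way the orientation conventions match up to a global sign, which can be absorbed into $f$. This step yields $\mu_f=S\theta$ directly, so it could even replace the first paragraph: the identity $\mu_f(P)=\sum_{\text{edges}}(\pm)(\phi(b)-\phi(a))$ shows both that $\mu_f$ is a valuation and that it is surjective.

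For the kernel, one direction is easy. If $f=h(L)$, each edge contributes $h(L)-h(L)=0$. If $f=g(a)$, each vertex appears in two flags, once with each sign, so it contributes $g(a)-g(a)=0$. Conversely, suppose $\mu_f=0$. Define the cochain $\theta_f(x,y):=f(y,L')-f(x,L')$ for $x,y$ on the line $L'$. Then $S\theta_f=\mu_f=0$, so by Theorem \ref{t:val-coh} $\theta_f=dg$ for some $g$. That gives $f(y,L')-g(y)=f(x,L')-g(x)$ for all $x,y\in L'$. Hence $f(a,L')-g(a)$ depends only on $L'$; call it $h(L')$. So $f=g+h$.

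The main obstacle is the sign convention: matching $\Upsilon_{a,L}$'s rotation sign with the boundary orientation of $\d P$ consistently at both endpoints of every edge. I would settle it on one triangle, then argue by rotation equivariance.
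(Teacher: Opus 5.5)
Your argument is correct and is essentially the paper's: write $\mu=S\theta$, define $f(\cdot,L')$ as an antiderivative of $\theta$ along each line $L'$, and for the kernel form $\theta_f(x,y)=f(y,L')-f(x,L')$, note $S\theta_f=\pm\mu_f=0$, and apply Theorem~\ref{t:val-coh} to get $\theta_f=dg$. (The paper takes the antiderivative of the normalized cochain with base point $L'\cap L_1$; you invoke Theorem~\ref{t:all-1D} line by line.) Your first paragraph is superfluous and slightly off as stated: the crossing-count cochain integrates to essentially the indicator that $a$ lies on the edge $P\cap L$, not to $\Upsilon_{a,L}$, whose cochain is $\pm d$ of the indicator function of the point $a$ on $L$. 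Drop it and use the identity $\mu_f=\pm S\theta_f$ from your later paragraphs to see that every $\mu_f$ is a simple valuation.
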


Thus, flag valuations generate the space of all simple valuations in the sense
 that any simple valuation on plane polygons is a (possibly, infinite) linear
 combination of flag valuations.
Every relation between flag valuation is a (possibly, infinite) linear combination of
 the following two types of relations:
$$
\sum_{L\ni a} \Upsilon_{a,L}=0,\quad \sum_{a\in L} \Upsilon_{a,L}=0.
$$
In the first relation, $a$ is fixed and $L$ ranges through all lines through $a$,
 while, in the second relation, $L$ is fixed, and $a$ ranges through all points of $L$.

\begin{proof}[Proof of Theorem \ref{t:flag-val}]
Let $\mu$ be any simple valuation on polygons.
Fix two distinct lines $L_0$, $L_1$ through the origin and visualized them
 as the horizontal and vertical coordinate lines.
By Theorem \ref{t:val-coc}, one can represent $\mu$ as $S\theta$, for
  a unique normalized cochain $\theta$.
Given $\theta$, we can now define a function $f$ on flags as follows.
If, for a flag $(a,L)$, the line $L$ is vertical, then set $f(a,L)=0$.
Otherwise, let $b$ be the intersection point of $L$ with $L_1$.
Set $f(a,L)=\theta(b,a)$.
We have thus defined the function $f$ on all flags, and we claim that $\mu_f=\mu$.

Given a polygon $P$, evaluating $\mu_f(P)$ involves a summation over all edges of $P$.
For an edge $e$ of $P$ with endpoints $a_\pm$ that lies in a line $L$,
 the contribution of $e$ to $\mu_f(P)$ consists of two terms
$$
f(a_-,L)-f(a_+,L)=\theta(a_-,a_+),
$$
 where the endpoints $a_\pm$ are labeled so that the direction from $a_-$ to $a_+$
 corresponds to the orientation of $\d P$.
As is immediate from the above formula, the computation of $\mu_f(P)$
 now reduces to integrating $\theta$ over $\d P$.
We have thus proved that $\mu=\mu_f$.

Suppose now that $\mu_f=0$, and define a cochain $\theta$ by the formula
 $\theta(a,b):=f(a,L)-f(b,L)$, where $a\ne b$ and $L$ is the line through $a$ and $b$.
By the above, $\mu_f(P)=\int_{\d P}\theta$, for every convex polygon $P$.
In particular, the cohomology class of $\theta$ is zero, that is, $\theta=dg$ for some function $g:\R^2\to\R$.
The function $g$ can also be viewed as a function on the flags $(a,L)$ that depends only on $a$, not on $L$.
Replacing $f$ with $f+g$, we can arrange that $\theta=0$.
However, the latter means that $f(a,L)=f(b,L)$ for any pair of distinct points $a$, $b$ on a line $L$,
 i.e., the value $f(a,L)$ depends only on $L$, not on $a\in L$.
This proves the last claim of the theorem.
\end{proof}

We can also describe a basis in the space of simple valuations on polygons, i.e.,
 a collection of simple valuations with the property that any simple valuation can
 be uniquely represented as a (possibly, infinite) linear combination of the chosen collection.

\begin{thm}
\label{t:flag-norm}
Consider the flag valuations $\Upsilon_{a,L}$ associated with the flags $(a,L)$ such that:
$$
(1)\ L \hbox{ is not parallel to } L_1;\quad (2)\ L\ne L_0;\quad (3)\ a\notin L_1.
$$
These flag valuations form a basis in the space of all simple valuations on polygons.
\end{thm}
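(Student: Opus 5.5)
We will read the theorem as a statement about functions on flags. By Theorem \ref{t:flag-val}, every simple valuation has the form $\mu_f=\sum f(a,L)\Upsilon_{a,L}$, and $\mu_f=0$ exactly when $f(a,L)=g(a)+h(L)$. Call a flag \emph{admissible} if it satisfies conditions (1)--(3). The claim then splits into two parts. Spanning: every $f$ is equivalent, modulo functions of the form $g(a)+h(L)$, to a function supported on admissible flags. Uniqueness: if $f$ is supported on admissible flags and $f=g+h$, then $f=0$.

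\textbf{Spanning.} Let $\mu$ be given and write $\mu=S\theta$ with $\theta$ normalized (Theorem \ref{t:val-coc}). The proof of Theorem \ref{t:flag-val} produces
$$
f(a,L)=\theta(b,a),
$$
where $b=L\cap L_1$ if $L$ is not vertical, and $f(a,L)=0$ if $L$ is vertical. We check that this $f$ already vanishes off admissible flags:
\begin{itemize}
\item If (1) fails, $L$ is vertical and $f=0$ by definition.
\item If (2) fails, then $L=L_0$, so $a,b\in L_0$, and normalization gives $\theta(b,a)=0$.
\item If (3) fails, then $a\in L_1$. Since $L$ is not vertical, $a$ is the unique point $L\cap L_1=b$, so $f=\theta(b,b)=0$.
\end{itemize}
So $\mu=\mu_f$ with $f$ supported on admissible flags, and existence is done.

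\textbf{Uniqueness.} Suppose $f$ is supported on admissible flags and $f(a,L)=g(a)+h(L)$ for all flags. We show $g$ and $h$ can be taken to vanish.
\begin{itemize}
\item For $a\in L_1$, take $L=L_1$: then (1) and (3) fail, so $f=0$ and $g(a)=-h(L_1)$. Thus $g$ is constant on $L_1$. Shift $g$ by a constant and $h$ by the opposite constant so that $g=0$ on $L_1$ and $h(L_1)=0$.
\item For any non-vertical $L$, let $b=L\cap L_1$. Since $b\in L_1$, the flag $(b,L)$ violates (3), so $0=f(b,L)=g(b)+h(L)=h(L)$. Hence $h(L)=0$ for every non-vertical $L$.
\item For a vertical line $L$, pick any $a\in L$. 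The flag $(a,L)$ violates (1), so $0=g(a)+h(L)$. Also $L_0$ is not vertical, and $(a,L_0)$ for $a\in L_0$ violates (2), so $0=g(a)+h(L_0)=g(a)$. Every vertical line meets $L_0$, so choosing $a=L\cap L_0$ gives $h(L)=0$.
\item Now $h\equiv 0$, so $0=f(a,L)=g(a)$ along any vertical line through an arbitrary point $a$. Hence $g\equiv 0$.
\end{itemize}
Therefore $f=g+h=0$, which proves linear independence.

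Both parts are bookkeeping once the earlier theorems are in hand. The only delicate point is in the uniqueness argument: one must use the non-admissible flags in the right order, namely $L_1$ first, then non-vertical lines via $L\cap L_1$, then $L_0$ and vertical lines. We expect that ordering to be where care is needed.
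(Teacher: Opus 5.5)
Your proof is correct. The existence half matches the paper, which simply asserts that the $f$ built in the proof of Theorem~\ref{t:flag-val} is normalized; you check the three cases explicitly.

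Your uniqueness half takes a different route from the paper's.

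\textbf{Your route.} You invoke the kernel description from Theorem~\ref{t:flag-val} ($\mu_f=0$ iff $f=g+h$). You then eliminate $g$ and $h$ by hand, using the excluded flags in this order:
\begin{enumerate}
\item the flags on $L_1$, to normalize $g|_{L_1}=0$ and $h(L_1)=0$;
\item the flags $(L\cap L_1,L)$, to kill $h$ on non-vertical lines;
\item the flags on $L_0$, to get $g|_{L_0}=0$ and hence $h=0$ on vertical lines;
\item the flags on vertical lines, to kill $g$.
\end{enumerate}

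\textbf{The paper's route.} It never introduces $g$. It forms the cochain $\theta(a,b)=f(a,L)-f(b,L)$ and observes that $\theta$ is normalized. This holds because $f$ vanishes on vertical flags (condition (1)) and on flags $(a,L_0)$ (condition (2)). The uniqueness part of Theorem~\ref{t:val-coc} then gives $\theta=0$, so $f(a,L)=h(L)$ outright. Condition (3) kills $h$ on non-vertical lines and condition (1) kills it on vertical ones.

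\textbf{Comparison.} The role that conditions (1) and (2) play in your elimination of $g$ (the constant shift and the step through $L_0$) is taken over in the paper by the normalization of $\theta$ plus Theorem~\ref{t:val-coc}. That is why the paper's argument is shorter. Yours is purely combinatorial once the kernel of $f\mapsto\mu_f$ is known, and it shows explicitly which family of excluded flags removes which part of $g+h$. Since the paper proves that kernel description with the same cochain machinery, both arguments ultimately rest on the same facts.
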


\begin{proof}
Say that a function $f$ on flags is \emph{normalized} if $f(a,L)=0$ for all flags that
 do \emph{not} satisfy (1) -- (3), that is, if $L$ is vertical (i.e., parallel to $L_1$), or if $L=L_0$, or if $a\in L_1$.
Restating the theorem, we need to show that every simple valuation $\mu$ has
 the form $\mu_f$ for a unique normalized $f$.
Existence follows from Theorem \ref{t:flag-val}: in fact, the function $f$ defined
 in the proof of this theorem is normalized.
Assume now that $f$ is normalized and that $\mu_f=0$; we want to show that $f=0$.
Note first that $\mu_f=S\theta$, where $\theta(a,b)=f(a,L)-f(b,L)$, and $L$ is the line through $a$ and $b$;
 see again the proof of Theorem \ref{t:flag-val}.
Since $\theta$ is normalized and $S\theta=0$, one has $\theta=0$, by Theorem \ref{t:val-coc}.
It follows that $f(a,L)$ depends only on $L$, not on $a$, that is, $f(a,L)=h(L)$
 for some function $h$ on lines.
On the other hand, $f(a,L)=0$ for $a\in L_1$ implies that $h(L)=0$ for any non-vertical line.
For vertical lines $L$, we also have $h(L)=0$, by the normalization assumption.
Thus, $f=0$, as desired.
\end{proof}

\section{Associated 2-forms}
\label{s:Tinv-2D}
A valuation $\mu$ on polygons in $\R^2$ is said to be \emph{polynomial of degree $\le d$}
 if $\mu(P+v)$ is a polynomial function of $v\in\R^2$ of degree $\le d$ (see Definition \ref{d:poly}), for every convex polygon $P$.
In particular, if $\mu$ is polynomial of degree $\le 0$, then we say that $\mu$ is \emph{translation invariant}.
Below, we associate a certain skew symmetric $\Q$-bilinear function with every translation invariant simple valuation.

\begin{dfn}[Associated 2-form]
\label{d:assform}
With every translation invariant simple valuation $\mu$ on polygons, associate
 a function $\omega_\mu:\R^2\times\R^2\to \R$ as follows.
Let $\theta$ be a cochain with $S\theta=\mu$ and, for each pair of vectors $u$, $v\in\R^2$
 write $\Pi(u,v)$ for the parallelogram spanned by $u$ and $v$, that is,
 the parallelogram with vertices $o$, $u$, $u+v$, $v$ (see Fig. \ref{fig:omega}, left);
 the indicated order of the vertices defines an orientation of $\Pi(u,v)$.
Set $\omega_\mu(u,v)$ to be the integral of $\theta$ over the boundary of $\Pi(u,v)$,
 where the orientation of the boundary corresponds to the chosen orientation of $\Pi(u,v)$.
Observe that $\Pi(u,v)$ may degenerate into an interval or a point, in which case we set $\omega_\mu(u,v)=0$.
Call $\omega_\mu$ the \emph{associated 2-form} of $\mu$.
\end{dfn}

For example, if $\mu$ is the area, then $\omega_\mu$ is the associated area form.
It is easy to that that $\omega_\mu(u,v)$ is well defined, i.e., independent
 of the choice of $\theta$ in its cohomology class: indeed, the integral
 of a coboundary over $\d\Pi(u,v)$ is zero.
Also, note that $\omega_\mu(u,v)=\pm \mu(\Pi(u,v))$, where
 the sign depends on the order of $u$ and $v$:
 it is plus if $u$, $v$ define the positive orientation of the plane, and it is minus otherwise.
Observe that $\omega_\mu$ is skew symmetric, i.e., $\omega_\mu(v,u)=-\omega_\mu(u,v)$,
 for the reason just mentioned.

\begin{lem}
\label{l:om-bilin}
The form $\omega_\mu$ is bilinear over $\Q$.
\end{lem}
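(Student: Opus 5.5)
By skew symmetry of $\omega_\mu$ (noted just before the lemma), it suffices to prove additivity in the first argument:
$$
\omega_\mu(u+u',v)=\omega_\mu(u,v)+\omega_\mu(u',v)\qquad\text{for all } u,u',v\in\R^2.
$$
Once additivity in each argument is known, $\Q$-linearity follows in the standard way. Integer multiples come from additivity, and $\omega_\mu(u/n,v)$ is recovered from $n\,\omega_\mu(u/n,v)=\omega_\mu(u,v)$.

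The main tool is a formula for $\omega_\mu$ as a boundary integral of a fixed cochain $\theta$ with $S\theta=\mu$. By definition,
$$
\omega_\mu(u,v)=\theta(o,u)+\theta(u,u+v)+\theta(u+v,v)+\theta(v,o).
$$
For the proof I would extend this integral to arbitrary closed polygonal 1-chains.

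Translation invariance of $\mu$ gives, for every convex polygon $P$ and every $w$, $\int_{\d(P+w)}\theta=\int_{\d P}\theta$. I would use this in the form "integral of $\theta$ over a translated closed chain equals the original", but only for chains that are boundaries of convex polygons. Applying it to parallelograms shows that $\omega_\mu(u,v)$ equals the integral of $\theta$ over the boundary of any translate $w+\Pi(u,v)$. Degenerate parallelograms give $0$ trivially.

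For additivity, I would decompose geometrically. Assume first that $u$ and $v$ are linearly independent and $u'$ is not parallel to $v$. Then the parallelograms $\Pi(u,v)$ and $u+\Pi(u',v)$ together with $\Pi(u+u',v)$ fit into a standard configuration. The chain
$$
\d\Pi(u,v)+\d\bigl(u+\Pi(u',v)\bigr)-\d\Pi(u+u',v)
$$
equals, modulo null chains, the boundary of the triangle $o,u,u+u'$ minus the boundary of its translate by $v$. This is the usual "shear" picture, and it holds as a chain identity because the edges parallel to $v$ cancel. Each triangle boundary integrates to $\pm\mu$ of that triangle with the same sign, so by translation invariance the two triangle contributions cancel. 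Combining this with the previous paragraph gives
$$
\omega_\mu(u+u',v)=\omega_\mu(u,v)+\omega_\mu(u',v).
$$
If the triangle degenerates, i.e.\ $u\parallel u'$, the identity is just a subdivision of a parallelogram along a line parallel to $v$, which follows from the simple-valuation property. Orientation signs are handled uniformly, because everything is phrased via the oriented integrals of $\theta$ rather than via $\mu$ of unsigned polygons.

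The main obstacle is the bookkeeping in the chain identity. I need to check that the edges cancel exactly modulo null chains in all relative positions of $u,u',v$, including the cases where $u$, $u'$ or $u+u'$ is parallel to $v$ or zero. Signed integrals make most of these automatic. The remaining degenerate cases, where some parallelogram collapses, I would treat directly: there $\omega_\mu=0$ by definition, and the identity reduces to the statement that the other two terms cancel by skew symmetry or by translation invariance.
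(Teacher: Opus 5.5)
Your proposal is correct and is essentially the paper's proof: the paper also reduces to additivity in the first argument via skew symmetry, uses exactly your chain identity $\d\Pi(u_1+u_2,v)=\d\Pi(u_1,v)+\d(\Pi(u_2,v)+u_1)+\d T-\d(T+v)$ with $T$ the triangle $o,\,u_1+u_2,\,u_1$, and integrates $\theta$ while invoking translation invariance only for $\d\Pi(u_2,v)$ and $\d T$. The case analysis you flag as the main obstacle is not actually needed: any closed chain supported on a single line, i.e.\ a degenerate triangle or parallelogram, integrates to zero by the cochain condition, so the identity and both translation-invariance steps hold uniformly.
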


\begin{proof}
Due to skew symmetry, it suffices to establish that
$$
\omega_\mu(u_1+u_2,v)=\omega_\mu(u_1,v)+\omega_\mu(u_2,v),
$$
 for all triples of vectors $u_1$, $u_2$, $v\in\R^2$.
Set $\Pi=\Pi(u,v)$ and $\Pi_i(u_i,v)$ for $i=1$, $2$.
Also, denote by $T$ the triangle with vertices $o$, $u$ and $u_1$.
Note that (see Fig. \ref{fig:omega})
$$
\d\Pi =\d\Pi_1+\d(\Pi_2+u_1)+\d T-\d (T+v)
$$
Fix a cochain $\theta$ such that $\mu=S\theta$ and integrate $\theta$ over $\d\Pi$.
On the one hand, the integral equals $\omega_\mu(u,v)$, by Definition \ref{d:assform}.
On the other hand, using the formula displayed above,
 the integral reduces to
$$
\int_{\d\Pi_1}\theta + \int_{\d(\Pi_2+u_1)}\theta=\omega_\mu(u_1,v)+\omega_\mu(u_2,v).
$$
Here, we used that $\mu$ is translation invariant, so that the integral of $\theta$
 over $\d(\Pi_2+u_1)$ equals the integral over $\Pi_2$, and
 the integral over $\d T$ equals that over $\d (T+v)$.
\end{proof}

\begin{figure}
  \centering
  \includegraphics[height=5cm]{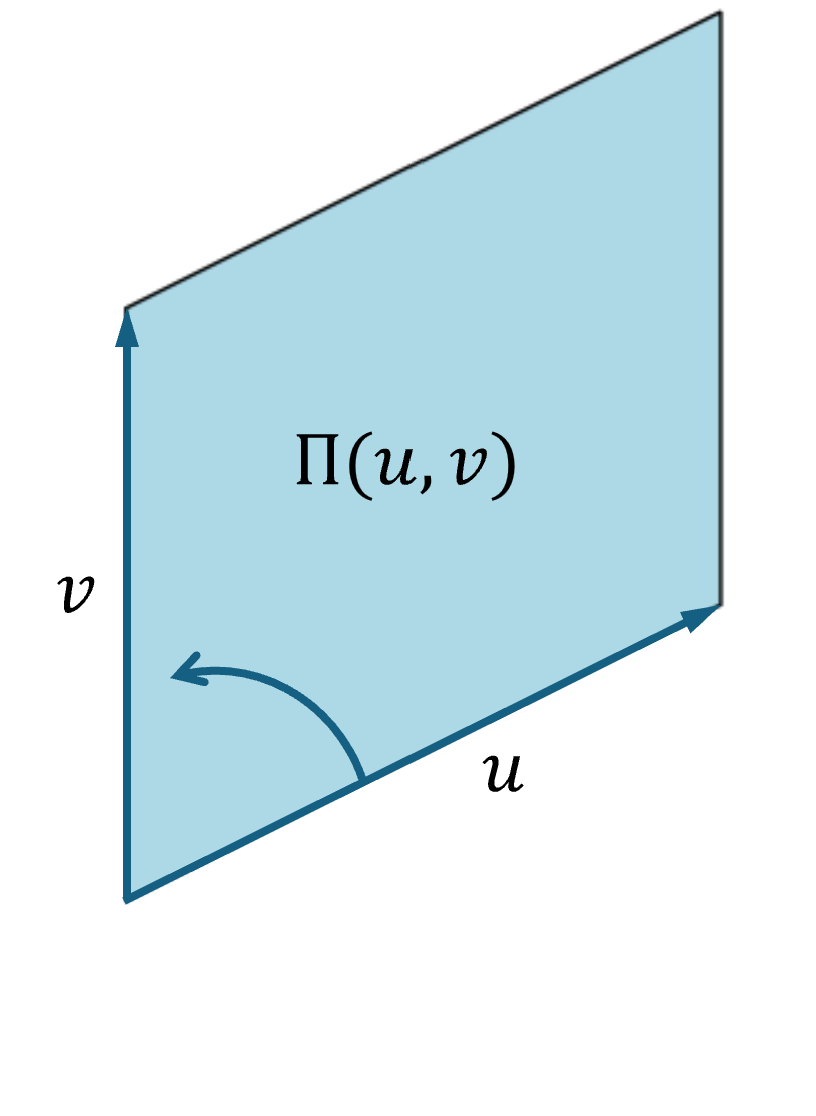}\hspace{1cm}
  \includegraphics[height=5cm]{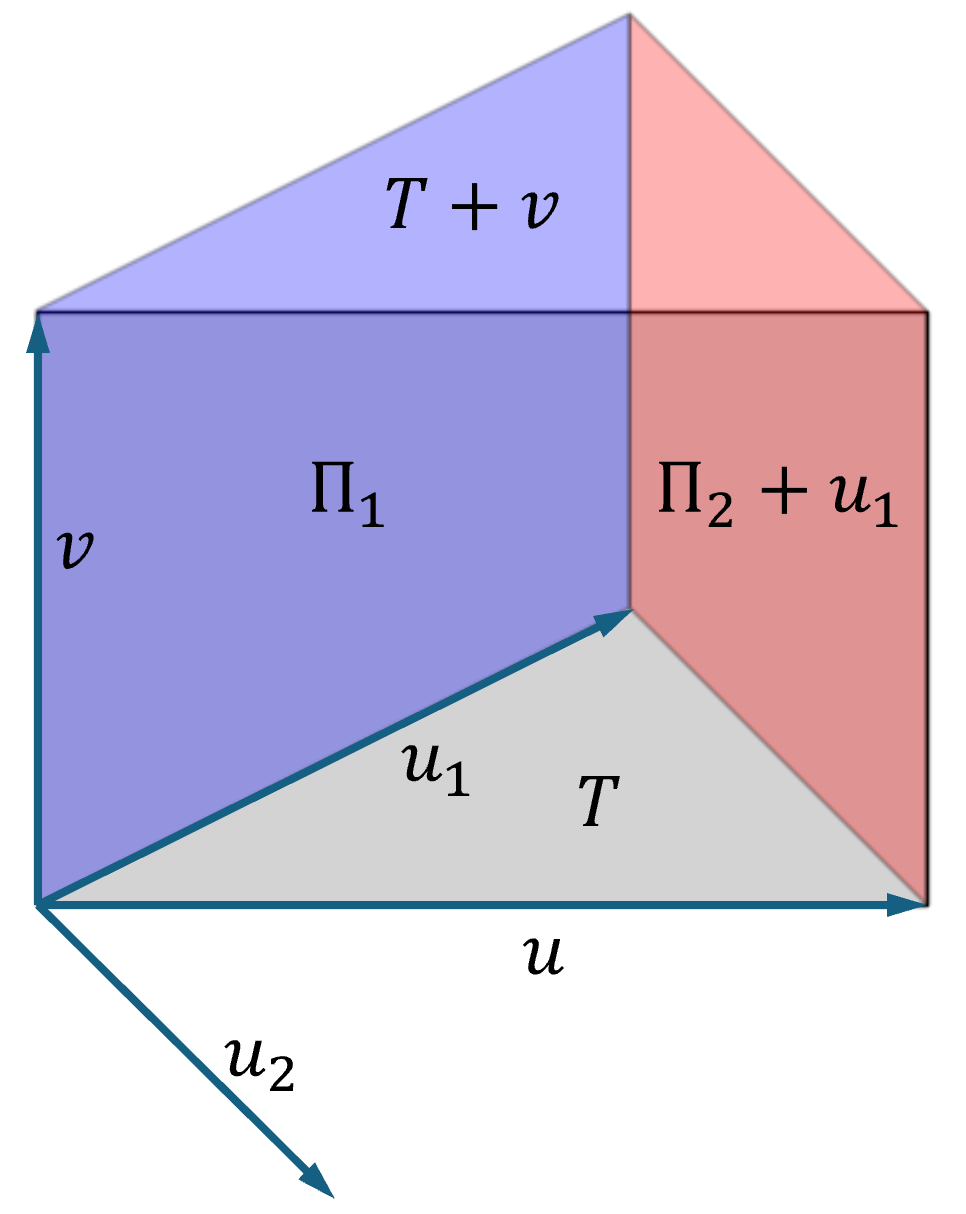}
  \caption{\small Left: the parallelogram $\Pi(u,v)$ spanned by vectors $u$ and $v$.
  Right: additivity of $\omega_\mu$ as a function of its first argument:
  $\omega_\mu(u_1+u_2,v)=\omega_\mu(u_1,v)+\omega_\mu(u_2,v)$.}\label{fig:omega}
\end{figure}

Note that the claim of Lemma \ref{l:om-bilin} can also be deduced directly
 from the additivity of $\mu$.
However, this requires case study.
For example, the cases $T\subset \Pi$ and $T\not\subset\Pi$ correspond to
 different configurations of polygons, cf. Fig. \ref{fig:omega}, right.
Dealing only with the boundaries of these polygons simplifies the consideration.

Another useful property of $\omega_\mu$ follows from $\mu$ being simple
 (the proof is straightforward and is left to the reader):

\begin{lem}
\label{l:collin}
Let $\omega_\mu$ be the associated 2-form of a translation invariant simple valuation $\mu$.
If $u$ and $v$ are linearly dependent over $\R$, then $\omega_\mu(u,v)=0$.
\end{lem}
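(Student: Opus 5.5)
If $u$ and $v$ are linearly dependent over $\R$, the plan is to show directly from Definition \ref{d:assform} that the cycle $\d\Pi(u,v)$ is a null chain. In the degenerate situation $\Pi(u,v)$ is not a genuine parallelogram: it collapses to a segment or a point. The definition already sets $\omega_\mu(u,v)=0$ in that case, but to avoid leaning on that convention I will check that the formula $\int_{\d\Pi(u,v)}\theta$ gives $0$ independently of it.

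First, the trivial cases. If $u=0$ or $v=0$, the boundary chain is $[o;u]+[u;u+v]+[u+v;v]+[v;o]$. With $v=0$ this reads $[o;u]+[u;u]+[u;o]+[o;o]$, and its integral vanishes because a cochain is skew symmetric and vanishes on degenerate segments. The case $u=0$ is symmetric.

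Otherwise $u,v\ne 0$ and $v=tu$ for some real $t$, so all four points $o$, $u$, $u+v$, $v$ lie on the single line $\R u$. The integral is then
$$
\theta(o,u)+\theta(u,u+v)+\theta(u+v,v)+\theta(v,o).
$$
Since all the points involved are collinear, the cochain condition $\theta(a,b)+\theta(b,c)=\theta(a,c)$ holds for every triple among them, with no betweenness restriction. Chaining it gives
$$
\theta(o,u)+\theta(u,u+v)=\theta(o,u+v),\qquad \theta(o,u+v)+\theta(u+v,v)=\theta(o,v).
$$
So the sum equals $\theta(o,v)+\theta(v,o)=0$ by skew symmetry. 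This holds for any cochain $\theta$ with $S\theta=\mu$, hence $\omega_\mu(u,v)=0$.

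Two remarks on the proof. Translation invariance is not used; only the fact that $\mu$ is simple, represented by a cochain as in Theorem \ref{t:val-coh}. The one point needing care is that the cochain axiom applies to arbitrary collinear triples, not only to a middle point lying between the other two. The paper's definition states exactly this, so I expect no real obstacle.
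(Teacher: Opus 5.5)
Your argument is correct: for $\R$-dependent $u,v$ all four vertices of $\Pi(u,v)$ lie on one line, so the unrestricted collinear cochain identity collapses $\int_{\d\Pi(u,v)}\theta$ to $\theta(o,v)+\theta(v,o)=0$. The paper leaves this lemma to the reader, and its definition of $\omega_\mu$ already sets the value to $0$ in the degenerate case, so your computation is the intended check that this convention agrees with the integral formula (which the proof of $\Q$-bilinearity implicitly relies on).
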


By a (constant) \emph{2-form over $\Q$}, we will mean a $\Q$-bilinear skew symmetric function
 $\omega:\R^2\times\R^2\to\R$.
Say that $\omega$ is \emph{simple}
 if it satisfies the conclusion of Lemma \ref{l:collin}, that is, if $\omega(u,v)=0$
 for every pair of collinear vectors $u$, $v$.
There is a direct characterization of simple 2-forms over $\Q$, given in the next proposition.

\begin{prop}
\label{p:simp2form}
Let $\omega$ be a simple 2-form over $\Q$.
Then there is a $\Q$-linear map $\xi:\bigwedge^2_\R\R^2\to \Q$ with the property
 $\omega(u,v)=\xi(u\wedge_\R v)$, for all $u$, $v\in\R^2$.
More geometrically, $\omega(u,v)$ depends only on the area of the parallelogram
 spanned by $u$ and $v$, and this dependence is $\Q$-linear.
\end{prop}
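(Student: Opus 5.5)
The idea is to show that $\omega$ is not only $\Q$-bilinear but actually \emph{$\R$-balanced}, meaning $\omega(\lambda u,v)=\omega(u,\lambda v)$ for every real $\lambda$. Once this is known, $\omega$ factors through $\R^2\otimes_\R\R^2$. Because $\omega(u,u)=0$, it then factors further through $\bigwedge^2_\R\R^2$. The proof would make this concrete in coordinates, so no tensor-product formalism is needed.

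\emph{Step 1 (balancing).} Fix $u,v\in\R^2$ and $\lambda\in\R$. The vectors $u+v$ and $\lambda u+\lambda v$ are collinear, so simplicity gives $\omega(u+v,\lambda u+\lambda v)=0$. Expanding by additivity in each argument (only additivity is used, not $\R$-linearity) yields
$$
\omega(u,\lambda u)+\omega(u,\lambda v)+\omega(v,\lambda u)+\omega(v,\lambda v)=0 .
$$
The first and last terms vanish, again by simplicity, since $u,\lambda u$ and $v,\lambda v$ are collinear. By skew symmetry $\omega(v,\lambda u)=-\omega(\lambda u,v)$, and therefore $\omega(u,\lambda v)=\omega(\lambda u,v)$.

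\emph{Step 2 (coordinates).} Fix an oriented basis $e_1,e_2$ of $\R^2$ and write $u=ae_1+be_2$, $v=ce_1+de_2$ with $a,b,c,d\in\R$. Expanding additively, the terms $\omega(ae_1,ce_1)$ and $\omega(be_2,de_2)$ vanish by simplicity, which leaves
$$
\omega(u,v)=\omega(ae_1,de_2)+\omega(be_2,ce_1).
$$
Step 1 lets us move scalars from one slot to the other, and skew symmetry handles the second term. This gives $\omega(ae_1,de_2)=\omega(ad\,e_1,e_2)$ and $\omega(be_2,ce_1)=-\omega(bc\,e_1,e_2)$. Now set $\xi(t):=\omega(te_1,e_2)$. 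This is additive in $t$, hence $\Q$-linear, and we obtain
$$
\omega(u,v)=\xi(ad)-\xi(bc)=\xi(ad-bc)=\xi(\det(u,v)).
$$
Identifying $\bigwedge^2_\R\R^2$ with $\R$ via $u\wedge_\R v\mapsto\det(u,v)$ in the chosen basis gives the required $\Q$-linear map. (Its target is $\R$; the ``$\Q$'' in the statement should presumably read $\R$.)

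\emph{Step 3 (geometric reformulation).} With $e_1,e_2$ chosen orthonormal and positively oriented, $\det(u,v)$ is exactly the signed area of $\Pi(u,v)$. This proves the second sentence of the proposition.

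The only non-formal point is Step 1: with merely $\Q$-linearity available, one cannot pull real scalars out of $\omega$, so the collinearity hypothesis has to be applied to the cleverly chosen pair $(u+v,\lambda(u+v))$. After that, everything is routine bookkeeping with additivity.
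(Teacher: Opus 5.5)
Your proof is correct, but it takes a different route from the paper. The paper fixes a basis and views $(u,v)$ as a $2\times 2$ matrix. It notes that the real shears $(u,v)\mapsto(u,v+\lambda u)$ and $(u,v)\mapsto(u+\lambda v,v)$ preserve both $\omega$ (because $\omega(u,\lambda u)=0$) and $\det(u,v)$. Assuming $\delta=\det(u,v)\neq 0$, it then reduces the matrix by a Gaussian-elimination argument to $\mathrm{diag}(1,\delta)$ and sets $\xi(\delta)=\omega(e_1,\delta e_2)$.

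You instead isolate the structural fact behind the result: simplicity, applied to the pair $u+v$, $\lambda(u+v)$, forces $\omega$ to be $\R$-balanced. After that, a direct coordinate expansion finishes the proof. This buys you three things:
\begin{enumerate}
\item The argument is uniform: there is no pivoting case analysis on which matrix entries vanish.
\item The degenerate case $\det(u,v)=0$ is covered automatically. The paper's reduction leaves it implicit, although it is trivial by simplicity.
\item It explains exactly why $\omega$ factors through $\bigwedge^2_\R\R^2$. Your Step 1 works verbatim for any real vector space $V$ and gives a $\Q$-linear factorization through $\bigwedge^2_\R V$.
\end{enumerate}
The paper's route stays closer to the geometry of the rest of the paper. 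Each shear is the algebraic counterpart of cutting a parallelogram and reassembling it into one with the same base and height, which fits the scissors-congruence theme.

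Your remark that the target of $\xi$ should be $\R$ rather than $\Q$ is right. The paper itself uses $\xi$ as a $\Q$-linear map $\R\to\R$, for instance in $\xi\circ\area$ in Theorem~\ref{t:stiv}.
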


\begin{proof}
Choose a basis of $\R^2$ as a real vector space.
With this chosen basis, vectors in $\R^2$ are identified with columns of size 2.
Pairs of vectors are then represented by $2\times 2$ matrices;
 we may write $(u,v)$ both for a pair of vectors $u$, $v$ and for the corresponding $2\times 2$ matrix.
The following \emph{column operations on matrices} do not change the value of $\omega$:
\begin{enumerate}
  \item $(u,v)\mapsto (u,v+\lambda u)$, where $\lambda\in\R$;
  \item $(u,v)\mapsto (u+\lambda v,v)$, where $\lambda\in\R$;
\end{enumerate}

The operations preserve not only $\omega(u,v)$ but also $\det(u,v)$.
Suppose that $\det(u,v)\ne 0$.
We claim that, using operations (1) -- (2), one can reduce $(u,v)$ to the diagonal form $\mathrm{diag}(1,\delta)$,
 where $\delta=\det(u,v)$, as follows.
Look at the first row.
Both entries of the first row cannot be zero simultaneously, by our assumption $\det(u,v)\ne 0$.
Using operations (1) and (2), we can then arrange that both entries are nonzero.
Operation (2) allows to make the $(1,1)$-entry equal to one.
Then operation (1) kills the $(1,2)$-entry, so that the first row is now $(1,0)$.
Proceed with the second row.
Note that the $(2,2)$-entry is now $\delta\ne 0$.
Operation (2) kills the $(2,1)$-entry, and we are done.

Evaluating $\omega$ on the diagonal matrix  $\mathrm{diag}(1,\delta)$ defines
 a certain function $\xi(\delta)$ of $\delta$, which is clearly $\Q$-linear.
We have
$$
\omega(u,v)=\omega(\mathrm{diag}(1,\delta))=\xi(\delta)=\xi(\det(u,v)),
$$
 which completes the proof.
\end{proof}

\section{Hadwiger--Glur classification}
\label{ss:HG}
Recall that $L_0$, $L_1$ are two distinct lines through the origin,
 chosen arbitrarily (but fixed) and referred to as the horizontal line and the vertical line, respectively.
By a \emph{coordinate rectangle}, we mean a parallelogram whose edges are parallel to $L_0$, $L_1$.

\begin{lem}
\label{l:ti-norm}
Suppose that the integral of a normalized cochain $\theta$ over the boundary of every coordinate rectangle vanishes.
Then $\theta$ vanishes on every vertical line.
\end{lem}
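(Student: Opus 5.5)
The plan is to read the conclusion directly off the definition of a normalized cochain given before Theorem \ref{t:val-coc}. Recall that $\theta$ is normalized if $\theta(a,b)=0$ whenever $b\in a+L_1$ and whenever $a,b\in L_0$. A vertical line is a line of the form $a+L_1$, and any two points $a,b$ on such a line satisfy $b\in a+L_1$. So the first normalization condition says exactly that $\theta(a,b)=0$ for every pair of points on a common vertical line. In other words, $\theta$ vanishes on every vertical line, and so does its integral over any $L_1$-chain by the definition of integrals.

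I will write the argument in the following order. First, fix an arbitrary vertical line $V=a_0+L_1$ and two points $a,b\in V$. Second, observe that $b-a\in L_1$, i.e.\ $b\in a+L_1$, and invoke normalization to get $\theta(a,b)=0$. Third, note that this gives $\theta^{L_1}=0$ in the notation of the decomposition $\theta=\sum_L\theta^L$ from Section \ref{s:val-coc}. The hypothesis on coordinate rectangles is not used in this step; it only becomes relevant if one wants information about the horizontal segments.

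Since the hypothesis on coordinate rectangles plays no role, I will add a remark so that it is clear the lemma is not vacuous in its intended use. Under that hypothesis, for the rectangle with vertices $(x,0)$, $(x',0)$, $(x',y)$, $(x,y)$, the integral over the boundary reduces to the top edge. This is because the bottom edge lies in $L_0$ and the two vertical edges contribute zero by what was just shown. Consequently $\theta$ also vanishes on horizontal segments, which is presumably the form in which the lemma is applied later in the Hadwiger--Glur classification.

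There is no real obstacle here. The only point needing care is to match the phrase ``vertical line'' with the condition $b\in a+L_1$ in the definition of normalization.
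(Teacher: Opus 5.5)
Your proof is correct for the statement as written, but it takes a different route from the paper. With the definition given before Theorem~\ref{t:val-coc}, normalization already includes $\theta(a,b)=0$ whenever $b\in a+L_1$, so vanishing on vertical lines is part of the hypothesis and the rectangle condition is never used.

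The paper argues differently. It takes a coordinate rectangle with one edge in $L_1$ and asserts that the two horizontal edges contribute nothing ``since $\theta$ is normalized''. It then concludes that the remaining vertical edge carries zero. That step is justified only under the opposite convention, in which $\theta$ vanishes on all horizontal lines and on the axis $L_1$. (This is in fact the normalization that the $\Gamma$-shaped function $g$ in the proof of Theorem~\ref{t:val-coh} actually produces.) Under the literal definition, only horizontal edges lying in $L_0$ are known to vanish. So the paper's proof is the rectangle argument that carries the real content of the lemma, written with the roles of $L_0$ and $L_1$ interchanged.

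Your closing remark is exactly that argument with the roles assigned consistently with the stated definition. A rectangle with bottom edge in $L_0$ and two vertical sides forces $\theta$ to vanish on its top edge, hence on every horizontal segment. Together with your main observation, this gives what Corollary~\ref{c:coc-ti} really needs: $\theta$ vanishes on all horizontal and all vertical lines, whichever convention is adopted.

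Keep the remark in your write-up. Your main argument alone is a tautology, and the remark is the only place where the hypothesis (vanishing on coordinate rectangles, i.e.\ $\omega_\mu=0$) does any work. That is precisely what the paper's version is meant to capture.
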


\begin{proof}
Consider a rectangle $R$ whose edges are parallel to $L_0$, $L_1$ and with one edge contained in $L_1$.
The integral of $\theta$ over the boundary of $R$ equals zero, since $\omega_\mu=0$.
On the other hand, since $\theta$ is normalized, the integrals of $\theta$ over the horizontal edges of $R$ vanish.
For the same reason, the integral over the edge contained in $L_1$, is also zero.
It follows that $\theta$ has zero value on the remaining vertical edge.
However, the latter can be any vertical line segment, with a suitable choice of $R$.
Thus, $\theta$ is zero on all vertical lines, as claimed.
\end{proof}

It follows that a normalized representative of a translation invariant cohomology class is
 necessarily zero not only on all horizontal lines but also on all vertical lines.

Suppose now that $\mu$ is a simple translation invariant valuation on plane polygons
 such that the associated 2-form $\omega_\mu$ is identically zero.
The latter requirement (that $\omega_\mu=0$) is equivalent to the property
 that $\mu$ takes zero value of every parallelogram.

\begin{cor}
\label{c:coc-ti}
Suppose that $\theta$ is normalized and $\mu=S\theta$ is translation invariant.
If $\omega_\mu=0$, then $\theta$ is itself translation invariant.
Moreover, translation invariant normalized cochains represent precisely those translation
 invariant simple valuations $\mu$, for which $\omega_\mu=0$.
\end{cor}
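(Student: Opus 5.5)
The plan is to show that $\theta$ and every translate $\theta_v(a,b):=\theta(a+v,b+v)$ are normalized cochains representing the same valuation, and then to invoke the uniqueness part of Theorem \ref{t:val-coc}. First, note that $\omega_\mu=0$ means that $\mu$, and hence $S\theta$, vanishes on every parallelogram, in particular on every coordinate rectangle. Lemma \ref{l:ti-norm} then tells us that $\theta$ vanishes on all vertical segments; it vanishes on $L_0$ by normalization.

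The key intermediate claim is that $\theta$ vanishes on \emph{every} horizontal segment, not just on $L_0$. To see this, take a horizontal segment $[a;b]$ lying on the line $L_0+w$. Consider the coordinate rectangle with one horizontal edge $[a;b]$ and the opposite edge the vertical projection of $[a;b]$ into $L_0$. The integral of $\theta$ over its boundary is zero, because $\omega_\mu=0$. The two vertical edges contribute zero, and the edge in $L_0$ contributes zero. Hence $\theta(a,b)=0$.

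Now consider $\theta_v$. It is a cochain, and for every convex polygon $P$,
$$
S\theta_v(P)=\int_{\d(P+v)}\theta=\mu(P+v)=\mu(P)
$$
by translation invariance. Moreover, $\theta_v$ is normalized. Vertical segments translate to vertical segments, on which $\theta$ vanishes. Segments in $L_0$ translate to horizontal segments, on which $\theta$ vanishes by the intermediate claim. So $\theta_v$ and $\theta$ are both normalized cochains with $S\theta_v=S\theta=\mu$. The uniqueness statement of Theorem \ref{t:val-coc} gives $\theta_v=\theta$ for all $v$, i.e.\ $\theta$ is translation invariant.

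For the "moreover" part there are two directions.

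In one direction, suppose $\theta$ is a translation invariant normalized cochain and $\mu=S\theta$. Then $\mu(P+v)=\int_{\d P}\theta_v=\mu(P)$, so $\mu$ is translation invariant. To get $\omega_\mu=0$ it suffices to check $\omega_\mu(u,v)=0$ for every parallelogram $\Pi(u,v)$. The boundary of $\Pi(u,v)$ consists of the edge $[o;u]$ followed by $[u;u+v]$, and then the reverses of their translates $[v;u+v]$ and $[o;v]$. By translation invariance, $\theta(v,u+v)=\theta(o,u)$ and $\theta(u,u+v)=\theta(o,v)$, so the four contributions cancel pairwise and the integral vanishes.

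The converse direction is the first part of the corollary, once we fix the unique normalized representative given by Theorem \ref{t:val-coc}.

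The main point to check carefully is the intermediate step that $\theta$ vanishes on all horizontal segments, not only on $L_0$. Without it, the translated cochain $\theta_v$ would not be normalized and uniqueness could not be applied. I expect the rectangle argument above, which parallels the proof of Lemma \ref{l:ti-norm}, to handle this; the degenerate cases (segments touching $L_0$ at an endpoint, or $w=0$) are trivial.
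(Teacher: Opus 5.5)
Your proposal is correct and follows the same route as the paper. The paper's proof rests on the fact that a normalized $\theta$ with $\omega_\mu=0$ vanishes on all horizontal and all vertical lines, a property stable under translations, so every translate of $\theta$ is again a normalized representative of $\mu$ and uniqueness in Theorem \ref{t:val-coc} forces it to equal $\theta$. You make that uniqueness step and the easy converse explicit, and your rectangle argument supplies the vanishing on all horizontal lines; this is what Lemma \ref{l:ti-norm} is meant to give, since its printed statement interchanges ``horizontal'' and ``vertical'' relative to the normalization convention.
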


\begin{proof}
Indeed, the condition that $\theta$ vanishes on all horizontal and on all vertical lines
 is stable under parallel translations.
The last claim of the corollary is immediate.
\end{proof}

Recall that any cochain $\theta$ can be written as $\sum_{L}\theta^L$, where the summation
 is over all lines through the origin, and $\theta^L$ vanishes on all lines not parallel to $L$.
Translation invariant normalized cochains are precisely those, for which $\theta^{L_0}=\theta^{L_1}=0$.
Also, all $\theta^L$ must be translation invariant.
It follows from Theorem \ref{t:Tinv-1D} that $\theta^L$ equals $\xi_L\circ\Len$ on
 straight line segments parallel to $L$, where $\Len$ denotes the Euclidean length
 of a segment, and $\xi_L$ is some additive function.
This implies the following lemma.

\begin{lem}
\label{l:HG}
Let $L$ be a line through the origin.
Suppose that a translation invariant cochain $\theta^L$ vanishes on all lines not parallel to $L$.
Then $\theta^L=\xi_L\circ\HG_L$, where $\xi_L:\R\to\R$ is a $\Q$-linear function.
\end{lem}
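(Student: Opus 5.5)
The plan is to interpret the equality $\theta^L=\xi_L\circ\HG_L$ as an equality of valuations, $S\theta^L=\xi_L\circ\HG_L$. Then we compute $S\theta^L(P)$ directly for a convex polygon $P$, using the one-dimensional description of $\theta^L$.

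First, we restrict $\theta^L$ to each line $L'$ parallel to $L$. Identifying $L'$ with $\R$ isometrically and compatibly with the orientation of $L$, the restriction becomes a translation invariant simple valuation on intervals. This is because $\theta^L$ is a cochain (additive under subdivision, vanishing on points) and is invariant under translations along $L$. By Theorem~\ref{t:Tinv-1D}, it has the form $\theta^L(a,b)=\xi_{L'}(\pm\Len[a;b])$ for some $\Q$-linear $\xi_{L'}$. The sign is $+$ when $b-a$ points in the positive direction of $L$ and $-$ otherwise; equivalently, $\theta^L(a,b)=\xi_{L'}(t)$ where $b-a=t\,e$ and $e$ is the positive unit vector of $L$. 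Translation invariance of $\theta^L$ under translations transverse to $L$ then forces $\xi_{L'}$ to be independent of $L'$. Call the common function $\xi_L$.

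Next, take a convex polygon $P$ and compute $S\theta^L(P)=\int_{\d P}\theta^L$. Since $\theta^L$ vanishes on segments not parallel to $L$, only edges of $P$ parallel to $L$ contribute. A convex polygon has at most two such edges, namely $P\cap L_+$ and $P\cap L_-$, where either may be degenerate and then contributes $0$. With $\d P$ oriented so that $P$ lies on the left, the edge on $L_+$ (where $P$ is to the left of the oriented line $L_+$) is traversed in the positive direction of $L$. The edge on $L_-$ is traversed in the negative direction. Hence
$$
S\theta^L(P)=\xi_L(\Len(P\cap L_+))+\xi_L(-\Len(P\cap L_-))=\xi_L(\HG_L(P)),
$$
where the last equality uses $\Q$-linearity, in fact only additivity and oddness, of $\xi_L$.

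The main obstacle is sign bookkeeping: checking that the boundary orientation of $\d P$ matches the labeling of $L_\pm$ in the definition of $\HG_L$. I would verify this on a coordinate example, such as a square with $L$ the horizontal axis oriented rightward. In that case $L_+$ is the bottom support line, since $P$ lies to its left, and the counterclockwise boundary traverses the bottom edge rightward, which confirms the signs. If the convention turned out reversed, one would replace $\xi_L$ by $-\xi_L$, which is still $\Q$-linear, so the statement holds in either case.
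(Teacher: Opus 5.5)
Your proposal is correct and follows the paper's route. The paper likewise restricts $\theta^L$ to segments parallel to $L$, applies Theorem~\ref{t:Tinv-1D} to obtain an additive function of length, and reads off $\xi_L\circ\HG_L$ from the two edges of $P$ on the support lines $L_\pm$. What you add are details the paper leaves implicit: that $\xi_L$ does not depend on which parallel line is used, that signed rather than unsigned length is the right quantity, and the check of the boundary orientation.
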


Recall that Hadwiger--Glur valuations $\HG_L$ were defined in Section \ref{s:intro};
 the definition of $\HG_L$ depends on a choice of an orientation of $L$.
Thus, in order to represent $\theta^L$ as $\xi^L\circ\HG_L$, one first needs
 to choose an orientation of $L$; this can be an arbitrary choice for each $L$.

We can now complete the description of all simple translation invariant valuations on plane polygons.
Recall that $\area(P)$ is the area of a polygon $P$; evidently, $\area$ is a simple translation
 invariant valuation.

\begin{thm}
\label{t:stiv}
Let $\mu$ be a simple translation invariant valuation on plane polygons.
Then $\mu$ can be represented as a sum
$$
\mu=\xi\circ\area +\sum_L \xi_L\circ\HG_L,
$$
 where $L$ ranges through all lines through the origin,
 and every line comes with an arbitrarily chosen orientation.
Moreover, if one imposes additionally that $\xi_{L_0}=\xi_{L_1}=0$, 
 then this representation is unique.
\end{thm}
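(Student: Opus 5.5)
The plan is to subtract off the ``area part'' of $\mu$ and then use the decomposition of translation invariant normalized cochains.

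\textbf{Step 1: removing the area part.} By Lemmas \ref{l:om-bilin} and \ref{l:collin}, the associated 2-form $\omega_\mu$ is a simple 2-form over $\Q$. Proposition \ref{p:simp2form} then gives a $\Q$-linear $\xi:\R\to\R$ with
$$
\omega_\mu(u,v)=\xi(\det(u,v)),
$$
where $\det$ is taken in a basis of unit area, so that $|\det(u,v)|=\area(\Pi(u,v))$. The valuation $\xi\circ\area$ is simple and translation invariant. Its associated 2-form is $\pm\xi(\area\,\Pi(u,v))=\xi(\det(u,v))$, where the sign convention of Definition \ref{d:assform} matches the sign of $\det$ and $\xi$ is odd. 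Hence $\nu:=\mu-\xi\circ\area$ is simple, translation invariant, and satisfies $\omega_\nu=0$, since $\omega$ is clearly linear in the valuation.

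\textbf{Step 2: decomposing $\nu$ into Hadwiger--Glur terms.} By Theorem \ref{t:val-coc}, $\nu=S\theta$ for a unique normalized cochain $\theta$. By Corollary \ref{c:coc-ti}, $\theta$ is translation invariant, and by Lemma \ref{l:ti-norm} it vanishes on all horizontal and vertical segments. Write $\theta=\sum_L\theta^L$, where $L$ ranges over lines through the origin. We have $\theta^{L_0}=\theta^{L_1}=0$, and each $\theta^L$ is translation invariant. Lemma \ref{l:HG} gives $S\theta^L=\xi_L\circ\HG_L$ with $\xi_L$ $\Q$-linear, and $\xi_{L_0}=\xi_{L_1}=0$. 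Summing over $L$, which is legitimate because only finitely many terms are nonzero on a given polygon, yields the claimed representation.

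\textbf{Step 3: uniqueness.} Suppose
$$
\xi\circ\area+\sum_L\xi_L\circ\HG_L=0,
$$
with $\xi_{L_0}=\xi_{L_1}=0$.

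\begin{itemize}
\item First I evaluate on parallelograms. Every $\HG_L$ vanishes on a parallelogram, because opposite edges are translates of each other and so have equal lengths on $L_+$ and $L_-$. Since parallelograms realize every positive area, this forces $\xi(t)=0$ for all $t>0$, and hence $\xi=0$ by oddness.
\item It remains to show that $\sum_L\xi_L\circ\HG_L=0$ forces every $\xi_L=0$ for $L\neq L_0,L_1$. Fix such an $L$ and a length $t>0$. I would use a triangle with one edge of length $t$ parallel to $L$ and its other two edges parallel to $L_0$ and $L_1$. Such a triangle exists because $L$ is not parallel to $L_0$ or $L_1$.
\item On this triangle the only possibly nonzero term is $\pm\xi_L(t)$, since the terms for $L_0$ and $L_1$ have zero coefficients. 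Hence $\xi_L(t)=0$.
\end{itemize}

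The main technical point is making the correspondence between $\theta^L$ and $\HG_L$ in Lemma \ref{l:HG} precise, including orientation signs. The only other point needing care is the sign bookkeeping in Step 1, where the associated 2-form of $\xi\circ\area$ must be checked to equal $\xi\circ\det$.
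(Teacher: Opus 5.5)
Your proposal is correct and follows the paper's proof: you subtract $\xi\circ\area$, obtained from the simple 2-form $\omega_\mu$ via Proposition \ref{p:simp2form}, then split the now translation invariant normalized cochain as $\sum_L\theta^L$ with $\theta^{L_0}=\theta^{L_1}=0$ and apply Lemma \ref{l:HG}. The one difference is uniqueness: you verify it directly by evaluating on parallelograms and on triangles with edges parallel to $L$, $L_0$, $L_1$, whereas the paper leaves it implicit in the uniqueness of the normalized cochain from Theorem \ref{t:val-coc}; both arguments are valid.
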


Theorem \ref{t:stiv} can be deduced from \cite{HG51} by means of linear algebra
 (an argument to this effect is given in \cite{KKT26}).
Also, this theorem is a special case of Theorem 4.2 from \cite{KP16}.

\begin{proof}
Consider $\omega_\mu$, the associated 2-form of $\mu$.
By Lemma \ref{l:collin}, this is a simple 2-form over $\Q$, which, by Proposition \ref{p:simp2form},
 has the form $(u,v)\mapsto \xi\circ\det(u,v)$.
In other words, for every parallelogram $\Pi$, one has $\mu(\Pi)=\xi\circ\area(\Pi)$.
Note also that $\xi$ is uniquely determined by $\mu$.
Subtracting $\xi\circ\area$ from $\mu$, we may now assume that $\mu$ vanishes on every parallelogram.

Let $\theta$ be the normalized cochain representing $\mu$.
As is established in Section \ref{ss:HG}, the cochain $\theta$ can be uniquely represented as
 the sum of $\theta^L$ over all lines $L$ through the origin, where $\theta^L$ vanishes
 on all lines not parallel to $L$.
Since $\theta$ is normalized, $\theta^{L_0}=\theta^{L_1}=0$.
By Lemma \ref{l:HG}, every $\theta^L$ has the form $\xi_L\circ\HG_L$, for some $\Q$-linear map $\xi_L:\R\to\R$.
\end{proof}

Let $\Ts$ denote the group of all parallel translations of the plane.
Since $\Ts$ acts on polygons, it also acts on valuations;
 translation invariant valuations are exactly those invariant under the action of $\Ts$.
Now build a slightly larger group $\Ts_\pm$ generated by $\Ts$ and the half-turn $z\mapsto -z$.
Elements of $\Ts_\pm$ have the form $z\mapsto a\pm z$, for $a\in\R^2$.
Theorem \ref{t:stiv} implies a characterization of simple $\Ts_\pm$ invariant valuations.

\begin{thm}
  \label{t:Tpm}
Every simple $\Ts_\pm$-invariant valuation on plane polygons has the form $\xi\circ\area$
 for some $\Q$-linear function $\xi:\R\to\R$.
\end{thm}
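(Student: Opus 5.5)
Since a $\Ts_\pm$-invariant valuation $\mu$ is in particular translation invariant, I would start from Theorem \ref{t:stiv} and write
$$
\mu=\xi\circ\area+\sum_L \xi_L\circ\HG_L ,
$$
with the normalization $\xi_{L_0}=\xi_{L_1}=0$. The goal is then to show that every $\xi_L$ vanishes. The natural tool is the half-turn $\sigma(z)=-z$. Area is invariant under $\sigma$, so the remaining sum $\nu=\sum_L\xi_L\circ\HG_L$ must be $\sigma$-invariant as well.

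The key computation is the behaviour of $\HG_L$ under $\sigma$. The half-turn maps $L$ to itself with the orientation reversed, since $\sigma$ restricted to $L$ is $x\mapsto -x$. It preserves the orientation of the plane. Given a polygon $P$, the support line $L_+$ of $P$ (with $P$ on its left) is carried to a support line of $\sigma P$. With respect to the original orientation of $L$, this image line has $\sigma P$ on its right, so it is the line $(\sigma P)$'s $L_-$. Since $\sigma$ preserves lengths, we get
$$
\HG_L(\sigma P)=\Len(\sigma P\cap L_+)-\Len(\sigma P\cap L_-)=\Len(P\cap L_-)-\Len(P\cap L_+)=-\HG_L(P).
$$
Hence $\nu\circ\sigma=-\nu$. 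Combined with $\sigma$-invariance, this gives $\nu=-\nu$, so $\nu=0$, and therefore $\mu=\xi\circ\area$.

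There is one subtle point, and it is where I expect the only real care is needed: whether each individual $\xi_L$ vanishes. Strictly, this is not required, because the statement only asks that $\mu$ have the form $\xi\circ\area$, and the identity $\nu=0$ already gives $\mu=\xi\circ\area$. If one did want to conclude $\xi_L=0$ for each $L$, one would use the uniqueness clause of Theorem \ref{t:stiv}. Alternatively, one could evaluate $\nu$ on a triangle having an edge of length $t$ parallel to $L$ and its other edges parallel to lines not parallel to $L$, which isolates $\xi_L(t)$ up to the other terms. For the theorem itself, the sign computation for $\HG_L$ under $\sigma$ is the main step. I would verify it carefully with a picture, checking the labeling convention for $L_\pm$ (left/right relative to the orientation of $L$), since an error in that sign would make the argument vacuous.
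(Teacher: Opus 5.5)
Your proof is correct and follows the paper's argument: you decompose $\mu$ via Theorem \ref{t:stiv}, then use that the half-turn fixes $\area$ while reversing the sign of every $\HG_L$, and hence of every $\xi_L\circ\HG_L$ (by $\Q$-linearity of $\xi_L$). The only small difference is that you get $\nu=-\nu$ pointwise and hence $\nu=0$ without using the uniqueness clause of Theorem \ref{t:stiv}, whereas the paper invokes that uniqueness to conclude directly that each $\xi_L=0$.
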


\begin{proof}
Let $\mu$ be a simple $\Ts_\pm$-invariant valuation.
Fix two lines $L_0$, $L_1$ through the origin, conventionally, the horizontal and the vertical lines.
Since $\mu$ is translation invariant, it can be uniquely represented as
$$
\mu=\xi\circ\area+\sum_L \xi_L\circ\HG_L,
$$
 where $L$ ranges through all lines containing the origin, and $\xi_{L_0}=\xi_{L_1}=0$.
We have chosen and fixed an orientation on every $L$.
Note that $\mu$ is also invariant under the map $z\mapsto -z$.
Applying this transformation to the right-hand side of the formula for $\mu$,
 we obtain that the term $\xi\circ\area$ remains unchanged whereas each of the terms $\xi_L\circ\HG_L$
 changes sign (because $L$ reverses its orientation).
By the uniqueness of the representation, it follows that all $\xi_L=0$.
\end{proof}

\begin{rem}
Curiously, the result is essentially different for a group $\Gs$ generated by $\Ts$
 and some group $\Gs_0$ of pure rotations about $o$ such that the antipodal involution $x\mapsto -x$
 does \emph{not} belong to $\Gs_0$.
In this case, orientations can be consistently chosen on all lines through $o$ from
 the same $\Gs_0$-orbit, and the space of $\Gs$-invariant simple valuations
 is generated by $\area$ together with valuations of the form $\sum_{g\in\Gs_0} \HG_{gL}$.
\end{rem}

\section{Translation scissors congruences}
\label{s:scissor}
Classification of all translation invariant simple valuations implies a solution of
 the translation scissors congruence problem.
Here, we recall this deduction.

\begin{dfn}[Polygon space]
Let $\Vc$ be the vector space over $\Q$, whose generators $[P]$ are labeled by
 compact convex polygons $P\subset\R^2$ with nonempty interior, subject to
 the relations
$$
[P+v]=[P],\quad [P]=[P_+]+[P_-],
$$
 where $v\in\R^2$ is an arbitrary vector, and $P_\pm$ are the two components
 of $P\sm L$, where $L$ is a line cutting through the interior of $P$.
The space $\Vc$ is called \emph{the polygon space}.
\end{dfn}

Higher dimensional versions of the polygon space are considered in
 \cite{JT78,Sah79,Mc89,Mor93,Dup01}.

It is straightforward from this definition that two convex polygons $P$, $Q$
 satisfy $[P]=[Q]$ if and only if there is a (possibly, nonconvex) polygon $R$
 disjoint from $P$ and $Q$ and such that $P\cup R$ and $Q\cup R$ are
 translation scissors congruent.
Observe also that translation invariant simple valuations on polygons can now
 be interpreted as $\R$-valued $\Q$-linear functionals on $\Vc$, i.e.,
 as elements of $\Hom_\Q(\Vc,\R)$.
We want to deduce the following classical theorem (\cite{HG51}) from Theorem \ref{t:stiv}.

\begin{thm}
  \label{t:HG}
Convex polygons $P$, $Q$ are translation scissors congruent
 if and only if  $\area(P)=\area(Q)$ and $\HG_L(P)=\HG_L(Q)$ for
 all lines $L$ through the origin.
\end{thm}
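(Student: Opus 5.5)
The necessity direction is immediate. If $P$ and $Q$ are translation scissors congruent, then $\area$ and every $\HG_L$ are simple translation invariant valuations, so they take equal values on $P$ and $Q$.

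For sufficiency I would argue in two stages. First, from the valuation data, I deduce that $[P]=[Q]$ in $\Vc$. Then I upgrade this equality in $\Vc$ to an actual translation scissors congruence.

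\emph{Step 1: $[P]=[Q]$ in $\Vc$.} Set $x=[P]-[Q]\in\Vc$. Suppose $x\neq 0$. Since $\Vc$ is a $\Q$-vector space, choose a $\Q$-basis of $\Vc$ containing $x$. Define a $\Q$-linear functional $\varphi:\Vc\to\Q\subset\R$ with $\varphi(x)=1$ by sending the other basis elements to $0$. Then $\mu(R):=\varphi([R])$ is a simple translation invariant valuation on convex polygons. Simplicity holds because the relations defining $\Vc$ encode dissection by a line, and this suffices by the remark in the introduction. Translation invariance holds by the relation $[R+v]=[R]$.

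By Theorem \ref{t:stiv}, we can write $\mu=\xi\circ\area+\sum_L\xi_L\circ\HG_L$. On $P$ and $Q$ only finitely many terms are nonzero, and by hypothesis each term takes equal values on $P$ and $Q$. Hence $\mu(P)=\mu(Q)$, which contradicts $\varphi(x)=1$. Therefore $[P]=[Q]$.

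\emph{Step 2: from $[P]=[Q]$ to scissors congruence.} This is the main obstacle. The remark after the definition of $\Vc$ only gives stable congruence: $P\cup R$ and $Q\cup R$ are translation scissors congruent for some auxiliary polygon $R$. We still need to cancel $R$.

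My plan is a normal-form argument. I would show by explicit cutting that every convex polygon is translation scissors congruent to a canonical configuration. This configuration consists of one rectangle with sides parallel to $L_0$ and $L_1$ and fixed width $1$, which carries the area, together with, for each edge direction $L\neq L_0,L_1$, a thin ``$L$-slanted'' parallelogram or triangle gadget, which carries $\HG_L$. The reduction would proceed as follows.

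First, triangulate $P$.

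Next, cut each triangle into two pieces and translate them to form a parallelogram. Here one uses a midline cut and swaps one of the halves, which needs only translation together with the fact that a triangle's half-turn copy can be realized by cutting. Handling this last point is exactly where Hadwiger--Glur's construction lives, so I would follow it directly.

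Finally, shear each parallelogram into a coordinate rectangle by cut-and-translate moves. Each such move leaves a residue of triangles with edges in the directions of $P$.

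Two configurations in normal form with the same $\area$ and $\HG_L$ values can then be matched directly. The residual pieces in each direction $L$ pair up because their signed edge lengths agree, and the rectangles agree because their areas agree. This proves the sufficiency.

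Alternatively, if Step 2 becomes too heavy, it would suffice to prove a cancellation lemma: if $P\cup R\sim Q\cup R$, then $P\sim Q$. I would attempt this by converting $R$ to normal form and showing that normal forms can be cancelled piece by piece.
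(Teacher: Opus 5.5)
Your Step 1 is exactly the paper's linear-algebra step: a nonzero vector of the $\Q$-space $\Vc$ is detected by some element of $\Hom_\Q(\Vc,\R)$, and Theorem \ref{t:stiv} identifies these functionals. It is fine. The gap is Step 2, which carries all of the geometric content, and the route you sketch for it fails.

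You propose to cut each triangle into two pieces and translate them into a parallelogram, invoking ``the fact that a triangle's half-turn copy can be realized by cutting.'' Both assertions are false for translations. A parallelogram is centrally symmetric, so every $\HG_L$ vanishes on it. For a triangle $T$ and a line $L$ parallel to a side of $T$, one support line contains that side and the other only a vertex, so $\HG_L(T)$ is plus or minus the length of that side and is nonzero. Moreover $\HG_L(-T)=-\HG_L(T)$, as in the proof of Theorem \ref{t:Tpm}. By the necessity direction you have already proved, $T$ is translation scissors congruent neither to a parallelogram nor to $-T$. Half-turns are exactly what the translation group lacks, which is why the invariants $\HG_L$ exist at all.

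Your final matching of normal forms is also unjustified as stated. A gadget cannot carry $\HG_L$ alone, because every polygon satisfies $\sum_L \HG_L(P)\,e_L=0$: the edge vectors of $\d P$ sum to zero, where $e_L$ is the unit vector orienting $L$. So each gadget also contributes to other directions. Combining such non-symmetric pieces is precisely the nontrivial part of Hadwiger--Glur's construction, and you only defer to it. Note too that a complete normal-form theorem would prove sufficiency by itself, which would make Step 1 superfluous.

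The missing idea is that no normal form is needed. Once $[P]=[Q]$, you have $P\cup R$ translation scissors congruent to $Q\cup R$ for some $R$. What remains is the cancellation lemma (Zylev's theorem, Theorem \ref{t:Zyl}), which the paper proves by a short, shape-independent swapping argument:
\begin{enumerate}
\item Realize both $P\cup R$ and $Q\cup R$ as dissections of one polygon $X$. Let $Y,Z\subset X$ be the subpolygons corresponding to $P,Q$, with a translation scissors congruence $\psi:Y\to Z$. The goal is to show that $X\sm Y$ and $X\sm Z$ are translation scissors congruent.
\item If $Y\cap Z=\0$, swap: use the identity on $X\sm(Y\cup Z)$ and $\psi^{-1}$ on $Z$.
\item If $Y$ and $Z$ overlap but have small area, first dissect $Y\cap Z$ by translations onto some $W\subset X$ disjoint from $Y\cup Z$. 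Then two swaps of disjoint pieces take $X\sm Y$ to $X\sm((Y\sm Z)\cup W)$ and then to $X\sm Z$.
\item In general, cut $Y$ and $Z$ into small corresponding pieces $Y_i$ and $Z_i=Y_i+v_i$. Prove by induction on $k$ that $X\sm(Y_1\cup\dots\cup Y_k)$ and $X\sm(Z_1\cup\dots\cup Z_k)$ are translation scissors congruent.
\end{enumerate}
This is what your ``alternative'' cancellation lemma should be. Your plan to prove it by cancelling normal forms piece by piece inherits the flaw above.
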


A proof of Theorem \ref{t:HG} involves two steps:
\begin{enumerate}
  \item A theorem of Zylev \cite{Zyl65} claiming that $[P]=[Q]$ if and only if the polygons $P$ and $Q$ are
   translation scissors congruent.
  \item A classical fact of linear algebra, namely, $\zeta(v)=\zeta(w)$ for a pair of vectors $v$, $w\in\Vc$
   and every $\zeta\in\Hom_\Q(\Vc,\R)$ implies $v=w$.
\end{enumerate}
For completeness, we give a proof of Zylev's theorem below
 (in a special case of translation scissors congruence).

\begin{thm}[Zylev]
\label{t:Zyl}
Suppose that plane polygons $P$, $Q$ are disjoint from a polygon $R$ and that
 $P\cup R$ is $\Ts$-scissors congruent to $Q\cup R$.
Then $P$ and $Q$ are $\Ts$-scissors congruent.
\end{thm}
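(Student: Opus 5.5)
The plan is the classical cancellation argument: show that $R$ can be removed from both sides by cutting everything into pieces that are translation equivalent to pieces of $R$ itself. Fix a dissection of $P\cup R$ into pieces $A_1,\dots,A_n$ and translations $v_1,\dots,v_n$ such that the pieces $A_i+v_i$ tile $Q\cup R$. Write $\sigma$ for the resulting piecewise translation $P\cup R\to Q\cup R$, defined up to the boundaries of the pieces, which have measure zero and are irrelevant for dissections. Refining if necessary, we may assume every $A_i$ is a convex polygon lying either in $P$ or in $R$, and every $A_i+v_i$ lies either in $Q$ or in $R$. To get this, intersect the $A_i$ with the pieces of fixed triangulations of $P$ and $R$, and pull back a triangulation of $Q\cup R$ adapted to $Q$ and $R$.

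The key step is to follow orbits of pieces. Let $\tau$ be the restriction of $\sigma$ to the part of $R$ that $\sigma$ maps back into $R$. Take a point $x\in P$. Apply $\sigma$ to it and keep applying $\sigma$ as long as the image lies in $R$. The first time the image lands in $Q$, the path stops. Since $\sigma$ is injective and area preserving, and $R$ has finite area, no path can stay in $R$ forever on a set of positive area. Similarly, starting from points of $R$, the backward orbits that never come from $P$ form the part of $R$ that is permuted within $R$.

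To keep everything polygonal, I use the common refinement of the finitely many dissections $\{A_i\}$ and their images under iterates of $\sigma$. This refinement must stabilize after finitely many steps. The argument is that each orbit segment $x,\sigma x,\dots,\sigma^k x$ with $\sigma^j x\in R$ for $0<j<k$ consists of distinct pieces, and all these pieces lie among finitely many combinatorial types: a piece is determined by which $A_i$ each iterate lies in. The number of iterates is at most $n+1$, because a word of length $>n$ repeats an index. So the sets
$$
P_w=\{x\in P:\ \sigma^j x\in A_{w_j}\ \text{for } j\le k,\ \sigma^k x\in Q\},
$$
indexed by words $w$ of length at most $n+1$, are finite intersections of translates of convex polygons. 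They are therefore convex polygons, and they cut $P$ into pieces. Each $P_w$ is translated by the total vector $v_{w_0}+\dots+v_{w_k}$ into $Q$. The images tile $Q$ up to measure zero, because every point of $Q$ has a backward orbit that must exit $R$ into $P$; otherwise it would lie in the $\sigma$-invariant part of $R$, which maps onto itself, and by injectivity cannot also cover points of $Q$.

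The main obstacle is the finiteness and injectivity bookkeeping. The claim that orbits through $R$ have bounded length needs justifying: a repeated index $A_i$ along an orbit is not immediately a contradiction, since two different points of $A_i$ are involved. I would handle this by a measure argument. The sets $\sigma^{-j}$ of $(R\text{-staying part})$ are disjoint translates of parts of $R$ of equal area, which would give infinite total area, so the set of points of $P$ with orbit length exceeding $N$ has area tending to $0$. Combined with the piecewise structure, this set must eventually be a finite union of polygons of arbitrarily small area that depends only on the word. Since there are finitely many words of each length, I expect one can show directly that the words are of bounded length. If that fails, I would replace it with an induction on $n$ that cancels one piece of $R$ at a time.
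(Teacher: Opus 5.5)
\textbf{There is a genuine gap, and it is the one you flag: exit times through $R$ need not be bounded, so the refinement does not stabilize.} The pigeonhole remark (a long word repeats an index) gives no bound. Your measure argument only shows that the set of points of $P$ with exit time $>N$ has area tending to $0$, not that it is eventually empty. Here is an instance where it is never empty.

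Let $X=[0,1)^2$ and let $\tilde\sigma$ be translation by $(\alpha,\alpha)$ modulo $\Z^2$, with $\alpha\notin\Q$. This is a piecewise translation with four rectangular pieces. It preserves each closed curve $\{x-y\equiv c \bmod 1\}$ and acts on it as the irrational rotation by $\alpha$. Take a small triangle $P\subset\{x\ge y\}$ touching the diagonal only at one vertex, and set $R=X\setminus P$, $Q=P+(10,0)$. Define $\sigma=\tilde\sigma$ off $\tilde\sigma^{-1}(P)$ and $\sigma=\tilde\sigma+(10,0)$ on $\tilde\sigma^{-1}(P)$.

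This is an honest finite dissection realizing $P\cup R\sim Q\cup R$. The exit time of $x\in P$ equals its first return time to $P$ under $\tilde\sigma$. For small $c>0$, the curve $x-y\equiv c$ meets $P$ in an arc of length $\ell(c)\to 0$. By Kac's lemma, a subset of positive length of that arc has return time $\ge 1/\ell(c)$. Hence, for every $N$, the points with exit time $>N$ form a set of positive area. Infinitely many of your sets $P_w$ are then nonempty, and you obtain only a countable dissection of $P$.

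This is intrinsic to orbit-following (Hilbert-hotel) arguments. They prove cancellation for countable equidecomposability, and finiteness is exactly what they cannot deliver.

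What is missing is an argument that uses the area of $P$ as free room instead of iterating $\sigma$. The paper works inside $X=P\cup R$. This polygon contains two scissors congruent subpolygons $Y$, $Z$ (the copies of $R$ coming from the two sides) whose complements correspond to $P$ and $Q$. The paper shows $X\setminus Y\sim X\setminus Z$ as follows.
\begin{itemize}
\item If $Y\cap Z=\emptyset$, simply swap $Y$ and $Z$.
\item If $Y$ and $Z$ have small area, move $Y\cap Z$ by a scissors congruence onto some $W\subset X\setminus(Y\cup Z)$. Then $X\setminus Y\sim X\setminus((Y\setminus Z)\cup W)$ by swapping $W$ with $Y\cap Z$. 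Also $X\setminus((Y\setminus Z)\cup W)\sim X\setminus Z$ by swapping $(Y\setminus Z)\cup W$ with $Z$.
\item In general, cut $Y$ and $Z$ into corresponding pieces of small area and cancel them one pair at a time by induction.
\end{itemize}
Each step uses finitely many pieces, so the result is a finite dissection. Your fallback of cancelling one piece of $R$ at a time points in this direction. Without the parking region $W$, and the small-area subdivision that guarantees $W$ exists, it is not yet an argument.
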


Polygons $P$, $Q$, $R$ are not assumed to be convex, neither are they necessarily connected.

\begin{proof}
Throughout this proof, scissors congruence means $\Ts$-scissors congruence.
By the assumption, both $P\cup R$ and $Q\cup R$ are scissors congruent to the same polygon $X$
 (one can set, e.g., $X:=P\cup R$).
Let $Y$ and $Z$ be subpolygons of $X$ corresponding to $P$ and $Q$.
We know that $Y$ and $Z$ are scissors congruent, via some scissors isomorphism $\psi:Y\to Z$,
 and want to define a scissors isomorphism between $X\sm Y$ and $X\sm Z$.
This is easy if $Y$ and $Z$ are disjoint: define $\phi:X\sm Y\to X\sm Z$ by
 setting $\phi=id$ on $X\sm (Y\cup Z)$ and $\phi=\psi^{-1}$ on $Z$.
Performing the scissors congruence $\phi$ will be referred to as \emph{swapping $Y$ with $Z$ in $X$}. 
If $Y$ and $Z$ are not disjoint but have sufficiently small area, then consider
 a scissors isomorphism $Y\cap Z\to W$, where $W\subset X$ is disjoint from $Z$; see Fig. \ref{fig:Zyl}.
By the above, $X\sm ((Y\sm Z)\cup W)$ is scissors congruent to $X\sm Z$
 (swap $(Y\setminus Z)\cup W$ with $Z$ in $X$).
On the other hand, $X\sm Y$ is scissors congruent to $X\sm ((Y\sm Z)\cup W)$ 
 (swap $Y\cap Z$ with $W$ in $X\setminus Y$).
In general, cut $Y$ and $Z$ into sufficiently small (in terms of area) pieces $Y_1$, $\dots$, $Y_n$ and,
 respectively, $Z_1$, $\dots$, $Z_n$ so that $Z_i$ is a translation copy of $Y_i$, for every $i=1$, $\dots$, $n$.
Using the argument just explained, one defines a scissors congruence between
 $X\sm (Y_1\cup \dots\cup Y_k)$ and $X\sm (Z_1\cup\dots \cup Z_k)$ by induction on $k$.
\end{proof}

\begin{figure}
  \centering
  \includegraphics[width=\textwidth]{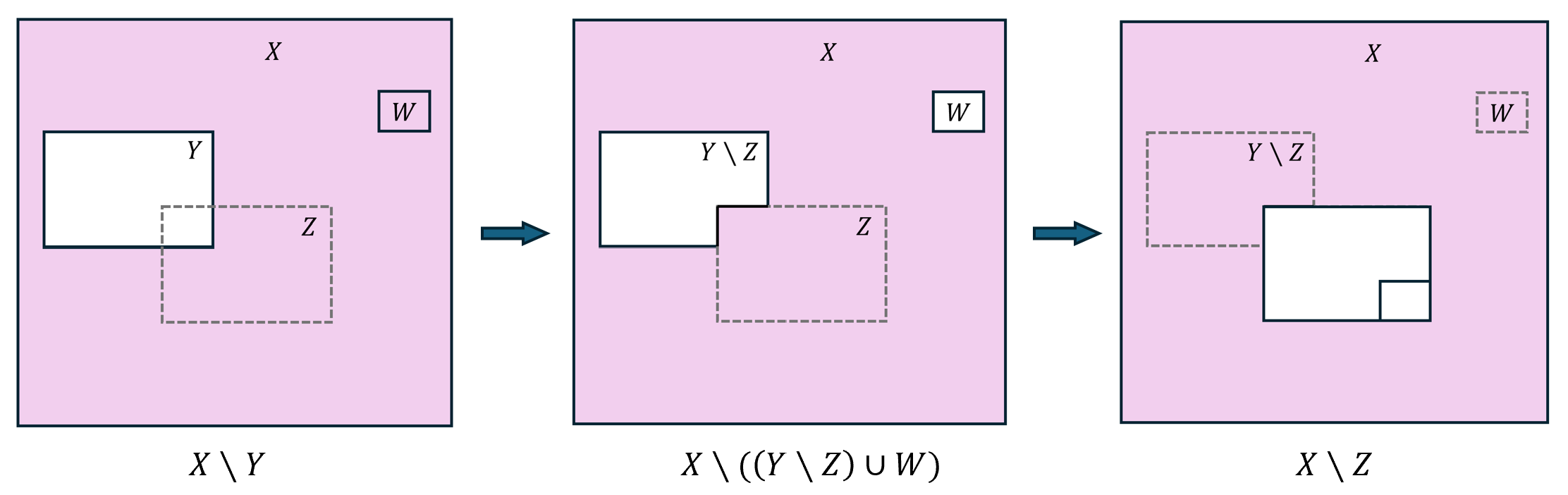}
  \caption{\small A scissors congruence between $X\setminus Y$ and $X\setminus Z$, where $Y$, $Z\subset X$
   are sufficiently small scissors congruent polygons.
   Left arrow: swapping $Y\cap Z$ with $W$ in $X\setminus Y$.
   Right arrow: swapping $(Y\setminus Z)\cup W$ with $Z$ in $X$.}\label{fig:Zyl}
\end{figure}

Zylev's argument is robust under wide variations of context:
 1) instead of $\Ts$-scissors congruence, one can consider $\Gs$-scissors congruence,
 for any group $\Gs$ of area preserving transformations;
 2) polygons do not have to have straight edges (they may be curved), etc.
Sah \cite{Sah79} initiated an axiomatic treatment of scissors congruence problems;
 in particular, Zylev's theorem is proved there in its maximal generality.
Theorem \ref{t:HG} follows from Theorem \ref{t:Zyl} and abstract linear algebra, as explained above.

\section{Polynomial valuations and cochains}
\label{s:poly-coc}
Below, we describe degree $\le d$ polynomial simple valuations on plane polygons.
For brevity, we may write $\deg(\phi)\le d$ to express that a function, or a valuation, $\phi$
 is polynomial of degree $\le d$.
 
\begin{ex}
Fix a polynomial function $u:\R^2\to\R$ over $\R$, and set $\mu(P)=\int_P u(x,y)\, dx\,dy$;
 then $\mu$ is a polynomial valuation of degree $\le \deg(u)$. 
In a certain sense, this is the most important example of polynomial valuations. 
\end{ex} 
 
Given a cochain $\theta$, consider the function $\theta_a:x\mapsto \theta(x,x+a)$.
Properties of $\mu=S\theta$ can be translated into properties of this function.
Recall that $L_0$, $L_1$ are the horizontal and the vertical lines through the origin,
 and that $\theta$ being \emph{normalized} means that $\theta_a=0$ for every $a\in L_1$
 and $\theta_a(x)=0$ for $a$, $x\in L_0$.
Finally, recall that $\Delta_a$ is the difference operator: $\Delta_a f(x)=f(x+a)-f(x)$.

\begin{lem}
  \label{l:tha-poly}
Let $\theta$ be a normalized cochain and $\mu=S\theta$.
Then $\deg(\mu)\le d$ if and only if $\theta_{a+b}-\theta_a$ and $\Delta_b\theta_a$ are polynomial functions of degree $\le d$,
 for every $a$ and every vertical $b$.
\end{lem}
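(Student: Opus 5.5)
The plan is to interpret both functions in the statement as values of $\mu$ on translates of two elementary polygons: a triangle with one vertical edge, and a parallelogram with two vertical edges. Once this is done, the ``only if'' direction is immediate. The ``if'' direction then follows by cutting an arbitrary convex polygon into such pieces.

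First I compute the boundary integrals. Fix $a\in\R^2$ and a vertical $b\in L_1$. For the triangle $T_x$ with vertices $x$, $x+a$, $x+a+b$, integrating $\theta$ around the boundary in the order $x\to x+a\to x+a+b\to x$ gives
$$\theta(x,x+a)+\theta(x+a,x+a+b)+\theta(x+a+b,x)=\theta_a(x)-\theta_{a+b}(x),$$
because the middle term is a value of $\theta$ on a vertical segment and vanishes by normalization. Similarly, for the parallelogram $\Pi_x$ with vertices $x$, $x+a$, $x+a+b$, $x+b$, the two vertical edges contribute zero. What remains is $\theta(x,x+a)+\theta(x+a+b,x+b)=\theta_a(x)-\theta_a(x+b)=-\Delta_b\theta_a(x)$. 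Since $T_x=T_0+x$ and $\Pi_x=\Pi_0+x$, we get $\theta_{a+b}-\theta_a=\pm\mu(T_0+x)$ and $\Delta_b\theta_a=\pm\mu(\Pi_0+x)$. The sign depends only on whether the chosen vertex order is counterclockwise, which does not depend on $x$. In the degenerate cases ($a$ vertical, $a=0$, $b=0$, or $a$ parallel to $b$) both sides vanish identically: the polygon degenerates, and every segment involved is vertical or the terms cancel. Hence, if $\deg\mu\le d$, both functions are polynomials of degree $\le d$.

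For the converse, assume the two families of functions have degree $\le d$, and let $P$ be a convex polygon. I will cut $P$ by the vertical lines through its vertices into vertical trapezoids, i.e., pieces bounded by two vertical segments (possibly degenerate) and two non-vertical edges. Each such trapezoid is cut further by a vertical-sided parallelogram and a triangle with one vertical edge, or is itself such a triangle. Concretely, one draws through an endpoint of the shorter vertical side a segment parallel to the opposite non-vertical edge. Every triangle with a vertical edge can be written as $T_0+p$ for suitable $a,b$: take $p$ to be the vertex not on the vertical edge, and $a$ the vector to one endpoint of that edge. Every parallelogram with vertical sides is $\Pi_0+p$ for suitable $a,b$. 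By additivity of $\mu$, $\mu(P+x)$ is a finite sum of terms $\pm\mu(T_0+p+x)$ and $\pm\mu(\Pi_0+p+x)$. Each term is a translate $x\mapsto g(x+p)$ of one of the given degree $\le d$ polynomial functions, and is therefore again polynomial of degree $\le d$, since $\Delta_c$ commutes with translations. Sums of such functions are polynomial of degree $\le d$, so $\deg\mu\le d$.

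The main obstacle is bookkeeping rather than ideas. One must check that the decomposition of each vertical trapezoid really produces only the two model shapes, including the cases where a trapezoid degenerates to a triangle with a vertical edge on either side. One must also check that the orientation signs are constant in $x$. I would handle both by always choosing $p$ as the vertex opposite the vertical edge, or the lower-left vertex of the parallelogram, and letting $a$ point to the adjacent vertex.
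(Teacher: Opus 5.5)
Your proposal is correct and follows the paper's proof. Both identify $\theta_{a+b}-\theta_a$ and $\Delta_b\theta_a$ with $\pm\mu(x+D)$ and $\pm\mu(x+\Pi(a,b))$, and both get the converse by reducing an arbitrary polygon to pieces with vertical sides, each of which splits into a vertical-sided parallelogram and a triangle with a vertical edge. The only difference is cosmetic: the paper writes $P$ as a signed union/difference of coordinate trapezoids hanging from $L_0$ and leaves the parallelogram-plus-triangle split implicit, whereas you use a genuine dissection into vertical slabs and make that split explicit.
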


\begin{proof}
Assume that vectors $a$ and $b$ are fixed, and that $b$ is vertical.
Note that, up to a sign which depends only on $a$ and $b$, the
 value $\theta_{a+b}(x)-\theta_a(x)$ coincides with $\mu(x+D)$,
 where $D$ is the triangle with vertices $o$, $a$, $a+b$.
The function $\Delta_b\theta_a(x)$ equals, up to a sign, $\mu(x+\Pi(a,b))$.
If $\deg(\mu)\le d$, then both $\mu(x+D)$ and $\mu(x+\Pi(a,b))$
 are degree $\le d$ polynomial functions of $x$.
Conversely, if these two functions have degree $\le d$, for any fixed choice of $a$ and $b$ as above, then
 so does $\mu(x+T)$, where $T$ is any coordinate trapezoid.
Any convex polygon can be represented, up to dimension $<2$ sets, as a union or
 difference of coordinate trapezoids; see the proof of Theorem \ref{t:val-coc}.
It follows that $\mu$ is a polynomial valuation of degree $\le d$.
\end{proof}

We claim that, for a normalized cochain $\theta$ 
 with $\deg(S\theta)\le d$, the functions $\theta_a$ satisfy $\deg(\theta_a)\le d+1$, for all $a\in\R^2$;
 this is a consequence of Lemma \ref{l:tha-poly} and the following general lemma concerning polynomial functions.

\begin{lem}
\label{l:gen-poly}
Let $\varphi$ be a function on $\R^2$ such that $\deg(\Delta_v\varphi)\le d$, 
for every vertical vector $v$ and, moreover, on every horizontal line $\varphi$ is a polynomial of degree $\le d+1$.
Then $\deg(\varphi)\le d+1$. 
\end{lem}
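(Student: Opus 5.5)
We argue by induction on $d$, using the definition of polynomial functions via difference operators (Definition \ref{d:poly}) with $A=\R^2$. By definition, $\deg(\varphi)\le d+1$ means that $\deg(\Delta_w\varphi)\le d$ for every $w\in\R^2$. Since $\R^2=L_0\oplus L_1$, every $w$ decomposes uniquely as $w=h+v$ with $h$ horizontal and $v$ vertical. The identity
$$
\Delta_{h+v}\varphi(x)=\Delta_v\varphi(x+h)+\Delta_h\varphi(x)
$$
shows that it is enough to treat the two summands separately. The first summand is a translate of $\Delta_v\varphi$, and translates of polynomial functions of degree $\le d$ are again of degree $\le d$; so this term has degree $\le d$ by hypothesis. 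The whole problem therefore reduces to showing that $\deg(\Delta_h\varphi)\le d$ for every horizontal $h$.

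Set $\psi=\Delta_h\varphi$ and check that $\psi$ satisfies the hypotheses of the lemma with $d$ replaced by $d-1$.
\begin{itemize}
\item For vertical $v$, commutativity of difference operators gives $\Delta_v\psi=\Delta_h\Delta_v\varphi$. Since $\deg(\Delta_v\varphi)\le d$, this has degree $\le d-1$.
\item On each horizontal line, $\psi$ is the difference along $h$ of a polynomial of degree $\le d+1$ on that line. Hence it is a polynomial of degree $\le d$ there.
\end{itemize}
By the induction hypothesis, $\deg(\psi)\le d$, as required.

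For the base case $d=0$ (or the formal case $d=-1$), $\Delta_v\varphi$ is constant, equal to some $c(v)$ for each vertical $v$, and $\varphi$ is affine, of degree $\le 1$, along each horizontal line. One could also take the convention that degree $\le -1$ means zero, so that the case $d=-1$ reads: $\varphi$ is invariant under vertical translations and constant on horizontal lines, hence constant. This makes the induction start cleanly. With the $d=-1$ base, the inductive step above uses exactly the hypotheses as stated, so the argument is uniform.

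The main obstacle is only bookkeeping. First, one must check that translates and differences of polynomial functions keep the expected degree bounds; both follow immediately from commutativity of the $\Delta$'s and induction. Second, one must make sure that "polynomial of degree $\le d+1$ on every horizontal line" is understood in the same difference-operator sense, restricted to the group $L_0$ acting on that line. With this reading, the restriction of $\Delta_h\varphi$ to a horizontal line is precisely $\Delta_h$ of the restriction, so the degree drops by one as claimed.
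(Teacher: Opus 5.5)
Your proof is correct, but it takes a different route from the paper's.

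The paper argues in one shot. It treats $\deg(\varphi)\le d+1$ as the vanishing of all $(d+2)$-fold iterated differences, and checks $\Delta_v^k\Delta_h^{d+2-k}\varphi=0$ for horizontal $h$ and vertical $v$. The terms with $k>0$ vanish because $\deg(\Delta_v\varphi)\le d$, and the term with $k=0$ vanishes by the hypothesis on horizontal lines.

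You instead stay inside the recursive Definition~\ref{d:poly} and induct on $d$. The splitting $\Delta_{h+v}\varphi(x)=\Delta_v\varphi(x+h)+\Delta_h\varphi(x)$ reduces the claim to $\Delta_h\varphi$, which inherits the hypotheses with $d-1$. The convention that degree $\le -1$ means zero gives a clean base case, and it is consistent with Definition~\ref{d:poly}: degree $\le 0$ then means all differences vanish, i.e.\ the function is constant.

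The paper's version is shorter, but it relies on two facts it does not state. The first is that vanishing of iterated differences characterizes the degree. The second is that it suffices to test differences along horizontal and vertical vectors only; this needs an expansion such as $\Delta_{h+v}=\Delta_h+\Delta_v+\Delta_h\Delta_v$. Your induction proves exactly that reduction along the way. So your argument is more self-contained, at the cost of a little bookkeeping with translates and the base case.
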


\begin{proof}
Let $h$ be a nonzero horizontal vector, and fix some vertical vector $v\ne 0$.
It suffices to establish that $\Delta_v^k\Delta_h^{d+2-k}\varphi=0$, for every $k=0$, $\dots$, $d+2$.
For $k>0$, this follows from the assumption $\deg(\Delta_v\varphi)\le d$.
On the other hand, for $k=0$, this follows from the assumption that $\varphi$
 has degree $\le d+1$ on every horizontal line.
\end{proof}

The following is a characterization of normalized cochains $\theta$ such that $\deg(S\theta)\le d$.

\begin{thm}
\label{t:normcoc-deg}
A normalized cochain $\theta$ satisfies $\deg(S\theta)\le d$ if and only if $\theta$ has the following properties:
\begin{enumerate}
  \item $\deg(\theta_a)\le d+1$ for every $a\in\R^2$;
  \item $\deg(\theta_a-\theta_b)\le d$ whenever $a-b$ is vertical.
\end{enumerate}
\end{thm}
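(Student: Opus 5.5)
We reduce both directions to Lemma \ref{l:tha-poly}, which characterizes $\deg(S\theta)\le d$ by two conditions: $\deg(\theta_{a+b}-\theta_a)\le d$ and $\deg(\Delta_b\theta_a)\le d$ for every $a$ and every vertical $b$.

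\emph{Sufficiency.} Assume (1) and (2), and fix $a$ and a vertical $b$. Condition (2), applied to the pair $a+b$ and $a$ (whose difference $b$ is vertical), gives directly $\deg(\theta_{a+b}-\theta_a)\le d$. Condition (1) gives $\deg(\theta_a)\le d+1$. By the definition of polynomial functions, $\Delta_b\theta_a$ then has degree $\le d$. Both hypotheses of Lemma \ref{l:tha-poly} hold, so $\deg(S\theta)\le d$.

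\emph{Necessity.} Assume $\deg(S\theta)\le d$. Lemma \ref{l:tha-poly} immediately yields (2): if $a-b$ is vertical, write $a=b+(a-b)$ and use $\deg(\theta_{b+(a-b)}-\theta_b)\le d$. The real work is (1), which we obtain from Lemma \ref{l:gen-poly} applied to $\varphi=\theta_a$.

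The first hypothesis of Lemma \ref{l:gen-poly} is $\deg(\Delta_v\theta_a)\le d$ for vertical $v$, and it is given by Lemma \ref{l:tha-poly}. The second hypothesis is that $\theta_a$ restricted to each horizontal line $x_0+L_0$ is a polynomial of degree $\le d+1$. This is the main step. We consider the horizontal coordinate trapezoid strip: for $x\in x_0+L_0$ and a horizontal vector $h$, additivity of $\theta$ on the line through $x$ and $x+a$ suggests comparing $\theta_a(x+h)-\theta_a(x)$ with the value of $\mu$ on a translate of a fixed polygon.

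\begin{itemize}
\item \emph{Case $x_0+L_0=L_0$.} We use the parallelogram $\Pi(h,a)$ translated by $x$. Its boundary consists of $[x;x+h]$, $[x+h;x+h+a]$, $[x+a+h;x+a]$ and $[x+a;x]$. Since $\theta$ vanishes on $L_0$, the edge $[x;x+h]$ contributes nothing. Hence $\pm\mu(x+\Pi(h,a))=\theta_a(x+h)-\theta_a(x)-\theta_h(x+a)$.
\item \emph{Extra term.} The term $\theta_h(x+a)$ is the value of $\theta$ on a horizontal segment off $L_0$. It can be written via coordinate trapezoids as a degree $\le d$ polynomial in the translation parameter. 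Indeed, $\theta_h(y)$ for $y=y_0+t$ with $t$ vertical differs from $\theta_h(y_0)$ by $\pm\mu$ of a translated rectangle $\Pi(h,t)$. Restricted to a horizontal line, $\theta_h(y+s)$ with $s\in L_0$ equals $\pm\mu(s+T_{y,y+h})$, where $T_{y,y+h}$ is the rectangle with base in $L_0$. This is degree $\le d$ in $s$.
\item \emph{Conclusion on horizontal lines.} Thus $\Delta_h(\theta_a|_{x_0+L_0})$ has degree $\le d$ for every horizontal $h$, so $\theta_a$ restricted to each horizontal line has degree $\le d+1$. For a general horizontal line $x_0+L_0$, the same computation works after subtracting the degree $\le d$ contribution of the bottom edge, as just described.
\end{itemize}

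Lemma \ref{l:gen-poly} then gives $\deg(\theta_a)\le d+1$. The step I expect to be delicate is this horizontal-line bookkeeping. In particular, the case where the segment $[x;x+a]$ crosses $L_0$ is resolved by splitting at the crossing point, exactly as in the construction of Theorem \ref{t:val-coc}.
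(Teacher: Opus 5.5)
Your proof is correct and has the same skeleton as the paper's. Sufficiency and property (2) come straight from Lemma \ref{l:tha-poly}, and property (1) comes from Lemma \ref{l:gen-poly} applied to $\varphi=\theta_a$. The only difference is how you check the horizontal-line hypothesis of Lemma \ref{l:gen-poly}.

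The paper notes that a horizontal translation $s\in L_0$ slides the entire coordinate trapezoid: $T_{x+s,x+s+a}=T_{x,x+a}+s$. Hence $\theta_a(x+s)=\pm\mu(T_{x,x+a}+s)$, using two degenerate trapezoids when $[x;x+a]$ meets $L_0$. This gives, in one step, the stronger bound that $\theta_a$ has degree $\le d$ on every horizontal line.

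You apply this sliding argument only to horizontal segments (coordinate rectangles). You then pass to general $a$ through the parallelogram identity $\Delta_h\theta_a(x)=\pm\mu(x+\Pi(h,a))-\theta_h(x)+\theta_h(x+a)$. This yields degree $\le d+1$ on horizontal lines, which is exactly what Lemma \ref{l:gen-poly} requires.

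Your detour costs an extra step, but it never needs the position of $[x;x+a]$ relative to $L_0$. In fact, the crossing case you flag at the end does not arise in your argument: $\mu(x+\Pi(h,a))$ makes sense for every parallelogram, and a horizontal segment meets $L_0$ only if it lies in $L_0$, where $\theta$ vanishes. On the other hand, the sliding observation you made for $\theta_h$ applies verbatim to $\theta_a$, which would have made the parallelogram step unnecessary.
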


\begin{proof}
Suppose first that $\deg(\mu)\le d$, where $\mu:=S\theta$.
Then $\theta_a$ has degree $\le d$ on every horizontal line.
Indeed, $\theta_a(x)=\pm\mu(T_{x,x+a})$, where $T_{x,x+a}$ is a coordinate trapezoid
 defined in the proof of Theorem \ref{t:val-coc} (strictly speaking, this is true
 only if $[x;x+a]$ is not separated by $L_0$; if it is, one needs to consider two
 degenerate coordinate trapezoids on both sides of $L_0$).
As $x$ moves along a horizontal line, the trapezoid $T_{x,x+a}$ slides by means of parallel translations.
Choose a nonzero vertical vector $v$.
By Lemma \ref{l:tha-poly}, the function $\Delta_v \theta_a$ is polynomial of degree $\le d$.
It follows, by Lemma \ref{l:gen-poly}, that $\deg(\theta_a)\le d+1$, which proves property (1).
Finally, property (2) follows from Lemma \ref{l:tha-poly} again.

Now assume that $\theta$ satisfies properties (1) and (2); we want to show that $\deg(\mu)\le d$.
By Lemma \ref{l:tha-poly}, it suffices to show that $\deg(\Delta_v\theta_a)\le d$,
 for every $a$ and every vertical $v$; this follows from (1).
\end{proof}

A less unique but more invariant representation of $\mu$ is through a
 not necessarily normalized cochain $\theta$ with certain properties.

\begin{thm}
  \label{t:coc-deg}
Every simple degree $\le d$ polynomial valuation $\mu$ can be represented as $S\theta$, where
$$
\deg(\theta_a)\le d+1\quad \hbox{and}\quad\deg(\theta_{a+b}-\theta_a-\theta_b)\le d,
$$
 for all $a$, $b\in \R^2$.
Moreover, such $\theta$ satisfies $S\theta=0$ if and only if $\theta=dg$, for some $g$ with $\deg(g)\le d+2$.
\end{thm}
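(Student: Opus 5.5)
For existence, I would take $\theta$ to be the normalized cochain with $S\theta=\mu$ from Theorem \ref{t:val-coc}. Property (1) of Theorem \ref{t:normcoc-deg} gives $\deg(\theta_a)\le d+1$ for every $a$ at once.

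The key identity comes from the triangle $D$ with vertices $x$, $x+a$, $x+a+b$. By the cochain property along its edges,
$$
\theta_a(x)+\theta_b(x+a)-\theta_{a+b}(x)=\pm\mu(x+D_0),
$$
where $D_0$ is the triangle with vertices $o$, $a$, $a+b$. The sign depends only on $a$ and $b$, and both sides vanish if $D_0$ is degenerate, since then the edges are collinear. The right-hand side has degree $\le d$ in $x$ because $\deg(\mu)\le d$.

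Next, $\theta_b(x+a)-\theta_b(x)=\Delta_a\theta_b(x)$. This has degree $\le d$ because $\deg(\theta_b)\le d+1$. Therefore $\theta_{a+b}-\theta_a-\theta_b$ has degree $\le d$, as required. So existence reduces to Theorem \ref{t:normcoc-deg} plus this triangle computation.

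For the kernel statement, first suppose $\theta=dg$ with $\deg(g)\le d+2$. Then $\theta_a=\Delta_a g$ has degree $\le d+1$. Also
$$
\theta_{a+b}-\theta_a-\theta_b=\Delta_a\Delta_b g,
$$
using $\Delta_{a+b}=\Delta_a+\Delta_b+\Delta_a\Delta_b$, and this has degree $\le d$. Clearly $S(dg)=0$.

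Conversely, suppose $S\theta=0$ and $\theta$ satisfies the two conditions. By Theorem \ref{t:val-coh}, $\theta=dg$ for some $g$. We may normalize $g(0)=0$, so that $g(x)=\theta(0,x)=\theta_x(0)$. The task is to show $\deg(g)\le d+2$, which amounts to showing $\deg(\Delta_a g)\le d+1$ for every $a$. But $\Delta_a g(x)=g(x+a)-g(x)=\theta(x,x+a)=\theta_a(x)$, and this has degree $\le d+1$ by hypothesis. Hence $\deg(g)\le d+2$.

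The second condition is not even needed for this direction; it matters only in showing that $dg$ lies in the class. The only delicate point I anticipate is the existence step: checking that the sign and the degenerate-triangle cases in the triangle identity are handled correctly.
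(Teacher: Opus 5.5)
Your proposal is correct and follows the paper's proof essentially step for step. For existence you take the normalized cochain, get $\deg(\theta_a)\le d+1$ from property (1) of Theorem~\ref{t:normcoc-deg}, and combine the triangle identity with the fact that $\Delta_a\theta_b$ has degree $\le d$; for the kernel you use Theorem~\ref{t:val-coh} together with $\Delta_a g=\theta_a$. Your extra check that $dg$ with $\deg(g)\le d+2$ satisfies both conditions, via $\Delta_{a+b}=\Delta_a+\Delta_b+\Delta_a\Delta_b$, is a harmless addition that the paper leaves implicit.
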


\begin{proof}
Let $\mu$ be a simple degree $\le d$ polynomial valuation on polygons.
Define $\theta$ as the normalized cochain such that $\mu=S\theta$.
It follows from Theorem \ref{t:normcoc-deg} that $\deg(\theta)\le d+1$. 
Note also that
$$
\psi(x):=\theta_{a+b}(x)-\theta_a(x)-\theta_b(x+a)
$$
equals $\pm \mu(x+D)$,
 where $D$ is the triangle with vertices $o$, $a$, $a+b$, and the sign depends only on $a$ and $b$.
Hence, $\deg(\psi)\le d$, which follows from $\deg(\mu)\le d$. 
Also, $\theta_b(x+a)-\theta_b(x)$ is a degree $\le d$ polynomial of $x$, since $\theta_b(x)$ is of degree $\le d+1$.
Therefore, $\deg(\theta_{a+b}-\theta_a-\theta_b)\le d$. 

Conversely, consider a cochain $\theta$ such that $\theta_a$ is a degree $\le d+1$ polynomial, for every $a$,
 and such that $\deg(\theta_{a+b}-\theta_a-\theta_b)\le d$, for all $a$ and $b$.
Set $\mu=S\theta$.
We need to prove that $\deg(\mu)\le d$. 
It suffices to show that, for a triangle $D$ with vertices $o$, $a$, $a+b$, the value
 $\mu(x+D)$ depends on $x$ as a degree $\le d$ polynomial
 (here, we use that every convex polygon can be cut into triangles).
This function of $x$, up to a sign, was denoted by $\psi$ above.
Note that $\psi$ is different from $\theta_{a+b}-\theta_a-\theta_b$ by the additive term
 $x\mapsto \theta_b(x+a)-\theta_b(x)$ of degree $\le d$.
Hence, $\psi$ is indeed a degree $\le d$ polynomial function as claimed,
 which shows that $\mu$ is polynomial of degree $\le d$.

Finally, suppose that $\deg(\theta)\le d+1$ and that $S\theta=0$.
By Theorem \ref{t:val-coh}, the latter means $\theta=dg$, for some function $g:\R^2\to\R$.
Since $\theta_a=\Delta_a g$, it follows that $\deg(\Delta_a g)\le d+1$, for every $a$.
Therefore, we have $\deg(g)\le d+2$, as claimed. 
\end{proof}

\section{Polynomial valuations and flags}
\label{s:poly-flag}
We can also interpret the property $\deg(\mu)\le d$ in terms of
 an associated function $f$ on flags such that $\mu=\mu_f$.
Recall that Theorem \ref{t:flag-norm} represents $\mu$ as $\mu_f$ for
 a unique normalized function $f$ on flags.
Fixing a line $L\ni o$, define the function $f:\R^2\to\R$ by the formula $f_L(a):=f(a,L+a)$.
Thus, a function $f$ on flags is represented as a family of functions on $\R^2$,
 parameterized by all lines through the origin.
Note that $f$ being normalized can now be expressed as
$$
f_{L_1}=0,\quad f_{L_0}|_{L_0}=0,\quad f_L|_{L_1}=0,
$$
 for every line $L\ni o$.

\begin{lem}
\label{l:degf}
Suppose that $\deg(\mu)\le d$ and that $f$ is the normalized function on flags with $\mu=\mu_f$.
Then $\deg(f_L)\le d+2$, for every $L$.
\end{lem}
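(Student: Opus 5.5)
Throughout, $\theta$ is the normalized cochain with $\mu=S\theta$. By the uniqueness part of Theorem~\ref{t:flag-norm}, the normalized $f$ is the one built in the proof of Theorem~\ref{t:flag-val}: $f(a,L)=\theta(b,a)$, where $b$ is the intersection of $L$ with $L_1$, and $f(a,L)=0$ for vertical $L$. In particular $f_{L_1}=0$ and there is nothing to prove for $L=L_1$. So I fix a non-vertical line $L\ni o$ and write $b(a)$ for the point $(a+L)\cap L_1$. Then $f_L(a)=\theta(b(a),a)$ and $b(a+v)=b(a)+v$ for every vertical $v$. Let $e\in L$ be a nonzero vector. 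As in the proof of Lemma~\ref{l:gen-poly}, it will suffice to show that $\Delta_{tv}^k\Delta_{se}^{d+3-k}f_L=0$ for all $k=0,\dots,d+3$, every vertical $v$ and all reals $s,t$; here the direction $e$ plays the role of the horizontal direction. The main inputs are Theorem~\ref{t:normcoc-deg}(1), giving $\deg(\theta_u)\le d+1$ for all $u$, and the hypothesis $\deg\mu\le d$.

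\emph{Case $k=0$.} On a line $b+L$ with $b\in L_1$ one has $f_L(b+te)=\theta(b,b+te)$. Hence $\Delta_{se}f_L(a)=\theta(a,a+se)=\theta_{se}(a)$ by the cochain property. This holds for every $a$, since $a$ and $a+se$ share the same $b$. By Theorem~\ref{t:normcoc-deg}(1) the function $\theta_{se}$ has degree $\le d+1$, so $\Delta_{se}^{d+3}f_L=0$.

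\emph{Key identity for $k\ge1$.} For vertical $v$, integrate $\theta$ over the boundary of the parallelogram $P(a)$ with vertices $b,a,a+v,b+v$, where $b=b(a)$. The edge from $b+v$ to $b$ lies on $L_1$, so $\theta$ vanishes on it by normalization. This gives
$$\Delta_vf_L(a)=\theta(b+v,a+v)-\theta(b,a)=\theta_v(a)-\sigma\,\mu(P(a)),$$
where the sign $\sigma=\pm1$ depends only on $e$ and $v$.

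Now apply $\Delta_{se}$. Since $P(a+se)$ and $P(a)$ differ by the parallelogram $a+\Pi(se,v)$, additivity gives
$$\Delta_{se}\Delta_vf_L(a)=\Delta_{se}\theta_v(a)-\sigma\,\mu(a+\Pi(se,v)).$$
Both terms have degree $\le d$ in $a$: the first because $\deg\theta_v\le d+1$, the second by $\deg\mu\le d$. Similarly $P(a+v)=P(a)+v$, so
$$\Delta_v\Delta_vf_L(a)=\Delta_v\theta_v(a)-\sigma\bigl(\mu(P(a)+v)-\mu(P(a))\bigr).$$
This also has degree $\le d$, because $v\mapsto\mu(P+v)$ is a degree $\le d$ polynomial for each fixed polygon $P$ and the difference is taken with $P=P(a)$. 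Some care is needed here: the second term is not a priori polynomial in $a$, since $P(a)$ changes shape. So I will apply these identities only after at least one further difference has been taken, as follows.

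\emph{Cases $1\le k\le d+3$.} For $1\le k\le d+2$, write the operator as $\Delta_{tv}^{k-1}\Delta_{se}^{d+2-k}\circ(\Delta_{se}\Delta_{tv}f_L)$ and use the first identity (with $tv$ in place of $v$); a degree $\le d$ function is killed by $d+1$ differences. For $k=d+3$, use $\Delta_{tv}^{d+1}\circ(\Delta_{tv}\Delta_{tv}f_L)$ and the second identity. Hence all mixed differences of order $d+3$ in the directions $e$ and $v$ vanish, and the argument of Lemma~\ref{l:gen-poly} yields $\deg f_L\le d+2$.

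The main obstacle is exactly the non-polynomial term $\mu(P(a))$, whose shape depends on $a$. The plan handles it by always taking one extra difference, in direction $e$ or $v$, so that $P(a)$ is replaced by a translate of a fixed parallelogram.
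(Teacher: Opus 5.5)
Your argument is correct and is essentially the paper's proof: both rest on $\Delta_{se}f_L=\pm\theta_{se}$ (of degree $\le d+1$ by Theorem~\ref{t:normcoc-deg}) and on the parallelogram with vertices $b(a),a,a+v,b(a)+v$, which is merely translated when $a$ moves vertically, combined through the two-direction argument of Lemma~\ref{l:gen-poly}. The paper splits the order-$(d+3)$ differences by whether at least one is taken along $L$, so your extra identity for $\Delta_{se}\Delta_v f_L$ is not needed. Two small points: $\theta_v\equiv 0$ for vertical $v$ by normalization, and $\sigma\mu(P(a))$ should be read as the integral of $\theta$ over the oriented boundary, because the orientation of $P(a)$ flips when $a$ crosses $L_1$.
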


\begin{proof}
Let $\theta$ be the normalized cochain with $\mu=S\theta$.
Recall that $\theta(x,y)=f_L(x)-f_L(y)$, where $L$ is the line through $o$ parallel to the vector $y-x$.
Fix a line $L\ni o$ that is not vertical.
Then $\theta_a=-\Delta_a f_L$, for any $a\in L$.
It follows that $\deg(\Delta_a f_L)\le d+1$.
Also, fix a vertical vector $w\ne 0$ and observe that $\Delta_w f_L(x)$ equals $\pm\mu(x-u_x+\Pi(u_x,w))$,
 where $u_x$ is the vector from $L_1\cap (L+x)$ to $x$.
When $x$ moves along a vertical line, $u_x$ is constant.
Hence, the function $\Delta_w f_L$ is polynomial of degree $\le d$ on every vertical line;
 it follows that $f_L$ has degree $\le d+1$ on every vertical line.
The desired claim $\deg(f_L)\le d+2$ is now easily derived from Lemma \ref{l:gen-poly},
 where $a$ plays the role of $v$ and vertical lines play the role of horizontal lines.
\end{proof}

The following is a complete characterization of normalized functions $f$ on flags
 such that $\deg(\mu_f)\le d$.

\begin{thm}
  \label{t:fpoly}
A normalized function $f$ on flags satisfies $\deg(\mu_f)\le d$ if and only if
 it satisfies the following two properties:
\begin{enumerate}
  \item one has $\deg(f_L)\le d+2$, for every line $L\ni o$;
  \item if $a$, $b\in\R^2$ are such that $a-b$ is vertical, and $L_a$, $L_b$ are lines
  connecting $o$ with $a$, $b$, respectively, then $\Delta_a f_{L_a}-\Delta_b f_{L_b}$
  is a degree $\le d$ polynomial function.
\end{enumerate}
\end{thm}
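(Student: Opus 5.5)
The plan is to reduce everything to Theorem \ref{t:normcoc-deg} via the dictionary between normalized functions on flags and normalized cochains. Given the normalized $f$, define $\theta(x,y):=f(x,L)-f(y,L)$, where $L$ is the line through $x$ and $y$. As in the proofs of Theorems \ref{t:flag-val} and \ref{t:flag-norm}, $\mu_f=S\theta$, and $\theta$ is normalized because $f$ is. Indeed, $f$ vanishes on vertical flags, so $\theta$ vanishes on vertical segments. Also $f(\cdot,L_0)$ vanishes on $L_0$, so $\theta$ vanishes on segments inside $L_0$.

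The key identity is obtained by rewriting $\theta_a$ in terms of the functions $f_L$. For $a\ne 0$, let $L_a$ be the line through $o$ and $a$. Then $x$ and $x+a$ both lie on the line $L_a+x$, and $L_a+(x+a)=L_a+x$. Hence
$$
\theta_a(x)=f(x,L_a+x)-f(x+a,L_a+x+a)=f_{L_a}(x)-f_{L_a}(x+a)=-\Delta_a f_{L_a}(x).
$$
For $a=0$ we simply have $\theta_0=0$. For vertical $a$ both sides vanish, since $f_{L_1}=0$. With this identity in hand, condition (2) of Theorem \ref{t:normcoc-deg} for $\theta$ says that $\deg(\theta_a-\theta_b)\le d$ when $a-b$ is vertical. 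This is literally property (2) of the present theorem, up to an overall sign; degenerate cases such as $a=0$ or $b=0$ are read with the convention $\theta_0=0$. Condition (1) of Theorem \ref{t:normcoc-deg} reads $\deg(\Delta_a f_{L_a})\le d+1$ for all $a$.

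Forward direction. Assume $\deg(\mu_f)\le d$. Property (1) is exactly Lemma \ref{l:degf}. Property (2) follows from condition (2) of Theorem \ref{t:normcoc-deg} applied to $\theta$, via the identity above.

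Converse. Assume (1) and (2). By (1), $\deg(f_{L_a})\le d+2$, so by the definition of polynomial functions $\deg(\Delta_a f_{L_a})\le d+1$. Hence $\deg(\theta_a)\le d+1$, which is condition (1) of Theorem \ref{t:normcoc-deg}. Property (2) gives condition (2) of that theorem. Therefore $\deg(S\theta)=\deg(\mu_f)\le d$.

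The only place needing care, and what I expect to be the main obstacle, is the bookkeeping. First, one must check that $\theta$ is indeed normalized in the exact sense used in Theorem \ref{t:normcoc-deg}. Second, one must verify that the identity $\theta_a=-\Delta_a f_{L_a}$ holds for every $x\in\R^2$, including degenerate cases ($a=0$, $a$ vertical, $x$ on $L_0$ or $L_1$), so that no case slips through the translation between the two theorems.
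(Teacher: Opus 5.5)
Your proposal is correct and follows the paper's own proof essentially step for step. Both arguments reduce the statement to Theorem~\ref{t:normcoc-deg} through the identity $\theta_a=-\Delta_a f_{L_a}$ for the normalized cochain $\theta(x,y)=f(x,L)-f(y,L)$, and both invoke Lemma~\ref{l:degf} for property (1) in the forward direction. Your additional checks (that $\theta$ is normalized, and the degenerate cases) are bookkeeping the paper leaves implicit. For $a=0$ no convention is actually needed, since $\Delta_0$ annihilates every function.
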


\begin{proof}
Suppose first that $\deg(\mu_f)\le d$.
Then property (1) holds by Lemma \ref{l:degf}.
Property (2) follows from property (2) of $\theta$ from Theorem \ref{t:normcoc-deg}
 and the observation that the normalized $\theta$ with $S\theta=\mu_f$
 satisfies $\theta_a=-\Delta_a f_{L_a}$ and similarly for $\theta_b$.

Now suppose that properties (1) and (2) of $f$ hold.
Consider the corresponding cochain $\theta$ with $S\theta=\mu_f$.
We have $\theta_a =-\Delta_a f_L$, hence $\deg(\theta_a)\le d+1$;
 this is property (1) of Theorem \ref{t:normcoc-deg}.
Property (2) of $\theta$ (see Theorem \ref{t:normcoc-deg}) follows immediately from property (2) of $f$.
\end{proof}

A \emph{generalized virtual polygon} $P$ (cf. \cite{LMK22,Kho23}) in the plane is defined as a piecewise
 constant function on the complement of an oriented broken line $\d P$, possibly self-intersecting.
The value of this function on a component $U$ of $\R^2\setminus\d P$ equals to
 the index of $\d P$ with respect to any point $a\in U$ (the number of times $\d P$ winds around $a$).
Note that $P$ is determined by $\d P$; also, $\d P$ can be viewed as a chain.
For any simple valuation $\mu$, represent $\mu$ as $S\theta$, for some cochain $\theta$,
 and define $\mu(P)$, for every generalized virtual polygon $P$, as the integral of $\theta$ over $\d P$.
In particular, it makes sense to talk about the area of a generalized virtual polygon.
Recall that two points $a$, $b\in\R^2$ define a \emph{coordinate trapezoid} $T_{a,b}$
 (see the proof of Theorem \ref{t:val-coc} and Fig. \ref{fig:tra1}).
Sometimes, $T_{a,b}$ should be viewed as a generalized virtual polygon, e.g., when $a$ and $b$ are separated by
 $L_0$ or when, as one moves from $a$ to $b$, the interior of $T_{a,b}$ appears on the right rather than on the left.
With proper understanding of $T_{a,b}$ as a generalized virtual polygon, one always has $\theta(a,b)=\mu(T_{a,b})$.

Fix $n$ lines $L_0$, $\dots$, $L_{n-1}$ through the origin so that $L_i$ and $L_{i+1}$ are distinct
 for all $i\in\Z/n\Z$ but non-adjacent lines may coincide.
Also, for each of these lines $L_i$, fix a linear equation $\lambda_i=0$ defining $L_i$,
 where $\la_i:\R^2\to\R$ is some $\R$-linear function.
Now, every $n$-tuple of real numbers $h_0$, $\dots$, $h_{n-1}$ defines
 a unique generalized virtual polygon $P(h_0,\dots,h_{n-1})$ with vertices $a_0$, $\dots$, $a_{n-1}$ (taken in this order),
 where $a_i$ and $a_{i+1}$ lie in the line $\la_i=h_i$, for $i\in\Z/n\Z$.
The following is a corollary of Theorem \ref{t:fpoly}.
It is important to remember that the family of virtual polygons $P(h_0,\dots,h_{n-1})$
 depends not only on $h_0$, $\dots$, $h_{n-1}$ but also on a choice of linear functionals $\lambda_0$, $\dots$, $\lambda_{n-1}$.
Parameters $h_0$, $\dots$, $h_{n-1}$ are called \emph{support numbers}.
A 2D case of \cite[Corollary 5]{PKh92a} (in fact, a slightly more general statement, cf. \cite[Theorem 5.7]{LMK22})
 can now be established by different methods.

\begin{cor}[\cite{PKh92a,LMK22}]
\label{c:fpoly}
Let $P(h_0,\dots,h_{n-1})$ be a generalized virtual polygon with support numbers $h_0$, $\dots$, $h_{n-1}$, as above.
Given any degree $\le d$ polynomial simple valuation $\mu$, the function
 $\mu(P(h_0,\dots,h_{n-1}))$ is a degree $\le d+2$ polynomial function of the support numbers $h_0$, $\dots$, $h_{n-1}$.
\end{cor}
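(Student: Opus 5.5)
The plan is to write $\mu(P(h_0,\dots,h_{n-1}))$ as a finite signed sum of the functions $f_L$ evaluated at the vertices $a_i$, and then use two facts. First, each $f_L$ is a polynomial of degree $\le d+2$, by Lemma \ref{l:degf}. Second, each vertex $a_i$ depends additively on the support numbers.

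Concretely, let $f$ be the normalized function on flags with $\mu=\mu_f$ (Theorem \ref{t:flag-norm}). Set $\theta(x,y):=f(x,L)-f(y,L)$, where $L$ is the line through $x$ and $y$, and $\theta(x,x):=0$. The proof of Theorem \ref{t:flag-val} shows that $\theta$ is a cochain with $S\theta=\mu$.

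The value $\mu(P)$ on a generalized virtual polygon is defined as $\int_{\d P}\theta'$ for any cochain $\theta'$ with $S\theta'=\mu$. This does not depend on the choice of $\theta'$. Indeed, by Theorem \ref{t:val-coh} two such cochains differ by a coboundary $dg$, and the integral of $dg$ over the closed chain $\d P$ telescopes to zero. So we may use our $\theta$.

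Write $L_i$ also for the line through the origin. The edge $[a_i;a_{i+1}]$ lies on the affine line $\{\la_i=h_i\}=L_i+a_i=L_i+a_{i+1}$. By definition of $f_L$ we have $f_{L_i}(a_i)=f(a_i,L_i+a_i)$, so
$$
\mu(P(h_0,\dots,h_{n-1}))=\sum_{i\in\Z/n\Z}\bigl(f_{L_i}(a_i)-f_{L_i}(a_{i+1})\bigr).
$$
This identity also holds when an edge degenerates ($a_i=a_{i+1}$), since both sides then contribute zero.

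Next, the vertex $a_i$ is the intersection of the lines $\la_{i-1}=h_{i-1}$ and $\la_i=h_i$. Since $L_{i-1}\ne L_i$, the functionals $\la_{i-1}$ and $\la_i$ are linearly independent. Hence $a_i$ is obtained by solving a nondegenerate $2\times 2$ linear system, so $h\mapsto a_i(h)$ is an $\R$-linear map $\R^n\to\R^2$. In particular it is a group homomorphism.

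I will then record a small general fact. If $g:A\to\R$ has degree $\le e$ and $\phi:B\to A$ is a homomorphism of abelian groups, then $g\circ\phi$ has degree $\le e$. This follows by induction on $e$ from the identity $\Delta_b(g\circ\phi)=(\Delta_{\phi(b)}g)\circ\phi$. Applying it with $g=f_{L_i}$, which has degree $\le d+2$ by Lemma \ref{l:degf}, shows that each term $f_{L_i}(a_i(h))$ and $f_{L_i}(a_{i+1}(h))$ is a polynomial of degree $\le d+2$ in $h$. A finite sum of such polynomials has degree $\le d+2$, which gives the corollary.

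The main point needing care is the first step. One must check that $\mu$ on generalized virtual polygons really equals this flag expression for every choice of support numbers, including degenerate and self-intersecting configurations. Everything is phrased via the closed chain $\d P$ and the cochain $\theta$ built from $f$, so no convexity is used. The remaining subtlety is that Lemma \ref{l:degf} applies to every line $L\ni o$, including the normalized ones $L_0$ and the vertical line, where $f_L$ vanishes or is of controlled degree anyway.
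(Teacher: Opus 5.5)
Your proposal is correct and is essentially the paper's proof: take the normalized $f$ with $\mu=\mu_f$, write $\mu(P(h_0,\dots,h_{n-1}))$ as a signed sum of the values $f_{L_i}$ at the vertices, use $\deg(f_L)\le d+2$ (Lemma \ref{l:degf}, i.e.\ property (1) of Theorem \ref{t:fpoly}), and observe that the vertices depend linearly on the support numbers. Your extra checks only make explicit what the paper leaves implicit: that $\mu(P)$ does not depend on the representing cochain, that degenerate edges cause no trouble, and that composing with a group homomorphism preserves polynomial degree.
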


\begin{proof}
Let $f$ be a normalized function on flags such that $\mu=\mu_f$.
By Theorem \ref{t:fpoly}, the function $f$ is polynomial of degree $\le d+2$.
Computing the value of $\mu$ on $P:=P(h_0,\dots,h_{n-1})$ means computing
 the sum of $f(a,L)$ for all flags $(a,L)$ of $P$.
Each term in the sum, $f(a,L)=f_L(a)$ is a degree $\le d+2$ polynomial function of $a$.
It remains only to observe that $a$ itself depends linearly on the support numbers.
\end{proof}

For example, the integral over a polygon $P(h_0,\dots,h_{n-1})$ of a degree $d$ polynomial (over $\R$) function
 defines a degree $d+2$ polynomial of the support numbers $h_0$, $\dots$, $h_{n-1}$.

\begin{rem}
The claim made immediately above has a version dealing wih the sum over
 $P(h_0,\dots,h_{n-1})\cap\Z^2$ of a polynomial function $\alpha:\Z^2\to\R$;
 the latter claim is also covered by \cite{PKh92a}.
Even though this latter version can also be obtained by the methods of this paper,
 a number of modifications are needed.
One has to account for the following factors: (1) the sum of $\alpha(x)$ over $x\in\Z^2\cap P$
 is a non-simple valuation, (2) it is invariant only under translations by $\Z^2$.
\end{rem}

Recall that the \emph{Minkowski sum} of two sets $A$, $B\subset\R^2$ is defined as
$$
A+B:=\{a+b\mid a\in A,\ b\in B\}.
$$
This depends on an identification between $\R^2$ and a real 2D vector space,
 that is, on the choice of the origin in $\R^2$.
If the linear functionals $\la_0$, $\dots$, $\la_{n-1}$ are fixed, then
$$
P(h_0,\dots,h_{n-1})+P(h'_0,\dots,h'_{n-1})=P(h_0+h'_0,\dots,h_{n-1}+h'_{n-1}),
$$
 whenever all polygons involved are genuine (rather than generalized virtual) convex polygons.
Minkowski addition can be extended to generalized virtual polygons so that to keep the above general formula;
 it endows the set of all generalized virtual polygons with the structure of an abelian group.
Corollary \ref{c:fpoly} can now be interpreted as a degree $\le d+2$ polynomial
 dependence of $\mu(P)$ on $P$ \emph{with respect to Minkowski addition}.

\begin{rem}
Polynomial functions on a commutative semigroup admit canonical extensions to
 the corresponding Grothendieck group, see \cite[Corollary 2.13]{Kho25} for an explicit formula.
On the other hand, convex polygons in the plane form a commutative semigroup 
 under Minkowski addition whose Grothendieck group identifies with the group of 
 \emph{virtual polygons}, that is, generalized virtual polygons $P(h_0,\dots,h_{n-1})$ such that $P(h_0+c,\dots,h_{n-1}+c)$
 is convex for large $c>0$. 
Thus, it is enough to know the values of a polynomial valuation on all convex
 polygons to explicitly recover its values on all virtual polygons.
\end{rem}

\section{A description of polynomial valuations}
\label{s:desc-poly}
Our starting point will be Theorem \ref{t:coc-deg}.
According to this theorem, any degree $\le d$ polynomial valuation is obtained
 as $S\theta$ for some degree $\le d+1$ polynomial cochain.
If $\deg(\theta)\le d$, then $\deg(S\theta)=d$ automatically, no additional assumptions on $\theta$ are needed
 in this case.
Thus, it suffices to consider $\theta$ modulo degree $\le d$ polynomial cochains.
Recall that any polynomial function $f:\R^2\to\R$ over $\Q$ is a sum of its homogeneous components;
 write $f^{[m]}$ for the degree $m$ component of $f$; it satisfies
 $f^{[m]}(\la x)=\la^m f^{[m]}(x)$ for all $\la\in\Q$ and $x\in\R^2$.

By property (2) of Theorem \ref{t:coc-deg}, the polynomial $\theta_{a+b}-\theta_a-\theta_b$
 has degree $\le d$, for every $a$, $b\in\R^2$.
It follows that $\theta^{[d+1]}_{a+b}-\theta^{[d+1]}_a-\theta^{[d+1]}_b$ vanishes identically.
Hence, $\theta$ defines a polynomial 1-form
$\omega^\theta(a,x)=\theta^{[d+1]}_a(x)$.
Recall that a \emph{degree $\le d$ polynomial 1-form} on $\R^2$ is defined as
 a function $\omega:\R^2\times\R^2\to\R$ that depends linearly (over $\Q$)
 on the first argument and polynomially of degree $\le d$ (also over $\Q$) on the second argument.
The 1-form $\omega^\theta$ defined above is said to be \emph{associated with the cochain $\theta$}.
Note that $\omega^\theta(a,x)$ is a homogeneous degree $d+1$ polynomial of $x$,
 for every $a\in\R^2$.

Consider a polynomial 1-form $\omega$.
For every $a\in\R^2$, define a polynomial function $\omega_a:\R^2\to\R$ by the formula
 $\omega_a(x)=\omega(a,x)$; every polynomial 1-form can be thought of as a
 collection of these functions depending linearly on $a$.
Say that $\omega$ is \emph{exact on the lines} if,
 for every line $L$ through the origin, there is a polynomial function $F_L:\R^2\to\R$,
 called the \emph{$L$-antiderivative of $\omega$},
 such that $\omega_a(x)=D_a F_L(x)$ for all $a\in L$
 (recall that $D_a$ denotes the directional derivative along the vector $a$).
One can easily verify that $\omega$ is exact on the lines if and only if
 $D_a\omega_b=D_b\omega_a$ for every pair of vectors $a$, $b$ that are
 linearly dependent over $\R$.

\begin{lem}
  \label{l:ex-on-lines}
Let $\theta$ be a normalized degree $d+1$ homogeneous cochain satisfying properties (1)--(2)
 of Theorem \ref{t:coc-deg}.
Then the associated 1-form $\omega^\theta$ is exact on the lines.
\end{lem}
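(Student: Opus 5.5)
The plan is to use the criterion stated just before the lemma: $\omega$ is exact on the lines if and only if $D_a\omega_b=D_b\omega_a$ for every pair of vectors $a$, $b$ that are linearly dependent over $\R$. So I would fix a line $L\ni o$ and two vectors $a$, $b\in L$, and derive this symmetry from the cochain condition restricted to segments parallel to $L$. Normalization of $\theta$ should play no role; only homogeneity and the cochain property are used.

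\textbf{Step 1 (cochain condition along $L$).} Since $a$, $b\in L$, the points $x$, $x+a$, $x+a+b$ are collinear. The cochain condition therefore gives
$$
\theta_{a+b}(x)=\theta_a(x)+\theta_b(x+a),
\qquad\text{that is,}\qquad
\theta_{a+b}-\theta_a-\theta_b=\Delta_a\theta_b .
$$
Swapping the roles of $a$ and $b$ (the points $x$, $x+b$, $x+a+b$ are also collinear) gives $\theta_{a+b}-\theta_a-\theta_b=\Delta_b\theta_a$. Hence $\Delta_a\theta_b=\Delta_b\theta_a$ for all $a$, $b\in L$.

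\textbf{Step 2 (pass to top homogeneous parts).} By hypothesis each $\theta_c$ is a homogeneous polynomial of degree $d+1$ over $\Q$, so $\omega^\theta_c=\theta_c$. For a homogeneous degree $d+1$ polynomial $F$, the difference $\Delta_aF$ has degree $\le d$. Its degree $d$ homogeneous component is $D_aF$, where $D_a$ is understood as the $\Q$-polynomial directional derivative: the coefficient of $t$ in the polynomial $t\mapsto F(x+ta)$, $t\in\Q$. Taking degree $d$ components of the identity from Step 1 then yields $D_a\omega_b=D_b\omega_a$ for all $a$, $b\in L$. Since $L$ was an arbitrary line through $o$, this covers every pair of $\R$-linearly dependent vectors, and the quoted criterion gives exactness of $\omega^\theta$ on the lines.

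\textbf{Main obstacle.} The only delicate point is Step 2: justifying that the degree $d$ component of $\Delta_aF$ equals $D_aF$ in the $\Q$-polynomial setting. I would do this by expanding $F(x+ta)$ for rational $t$ using the multilinear (polarization) form of a homogeneous $\Q$-polynomial. In that expansion, the term linear in the increment $a$ is exactly $D_aF$ and has degree $d$ in $x$. Every other term is of degree $<d$ in $x$.

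As a fallback, if the quoted criterion is not to be used, I would construct the antiderivative directly. Define $G_L(x)=\theta(p(x),x)$, where $p(x)$ is the intersection of $x+L$ with a fixed transversal line. Step 1 then gives $\theta_a=\Delta_aG_L$ for $a\in L$. One would next prove that $G_L$ is polynomial via Lemma \ref{l:gen-poly}, and take the top-degree component of $G_L$ as $F_L$.
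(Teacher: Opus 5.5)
Your argument is correct, but it takes a different route from the paper. The paper never uses the closedness criterion. It applies Theorem~\ref{t:coc-deg} to get $\deg(S\theta)\le d$, then Theorem~\ref{t:fpoly} (via Lemma~\ref{l:degf}) to get $\deg(f_L)\le d+2$ for the normalized flag function $f$ with $\mu_f=S\theta$. Combining this with $\theta_a=-\Delta_a f_L$ for $a\in L$ and taking top-degree components exhibits an antiderivative directly: $\omega^\theta_a=-D_a f_L^{[d+2]}$. That route needs normalization (to identify $\theta$ with the cochain coming from $f$) and the nontrivial polynomiality of $f_L$ (Lemma~\ref{l:gen-poly} plus the parallelogram computation). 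In return it produces a concrete $F_L$, which the proof of Theorem~\ref{t:polyval-descr} then uses to build $\theta^\bullet$.

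Your route is shorter and more general. The identity $\Delta_a\theta_b=\Delta_b\theta_a$ for collinear $a$, $b$ is just the cochain condition, so closedness holds for any cochain with $\deg(\theta_a)\le d+1$, normalized or not, with no valuation theory involved. The price is that the real content moves into the sufficiency half of the quoted criterion, which the paper only asserts. Over $\Q$ it is a Poincar\'e-type lemma: write the components of $\omega_a$ via symmetric $\Q$-multilinear forms, use closedness to get symmetry among the slots filled from $L$, split $\R^2=L\oplus L'$, and integrate in the $L$-variable. If you lean on it, a few lines of proof are warranted. Your fallback is essentially the paper's construction: $G_L$ is $\pm f_L$ with transversal $L_1$.

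One correction to Step 2: do not use ``homogeneous'' to set $\omega^\theta_c=\theta_c$. The lemma is applied in Theorem~\ref{t:polyval-descr} to the normalized cochain of an arbitrary $\mu$, whose $\theta_a$ are not homogeneous (already for the area), and the paper's proof takes $\theta_a^{[d+1]}$. Indeed, if every $\theta_c$ were homogeneous of degree $d+1$, then for collinear $a$, $b$ the function $\theta_{a+b}-\theta_a-\theta_b=\Delta_a\theta_b$ would be both homogeneous of degree $d+1$ and of degree $\le d$. It would therefore vanish, and the lemma would be degenerate.

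The fix is one line. For $\deg(\theta_b)\le d+1$, the degree $d$ component of $\Delta_a\theta_b$ is $D_a\theta_b^{[d+1]}=D_a\omega^\theta_b$. This is because $\Delta_a$ sends each lower component $\theta_b^{[k]}$, $k\le d$, to degree $\le k-1<d$.
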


\begin{proof}
By Theorem \ref{t:coc-deg}, the valuation $\mu=S\theta$ is polynomial of degree $\le d$.
Theorem \ref{t:fpoly} now implies that the corresponding normalized function $f$ on flags
 is such that $\deg(f_L)\le d+2$, for every line $L$ though the origin.
Recall that $\theta_a=-\Delta_a f_L$, for every $a\in L$, and that $D_af_L$ coincides
 with the top degree homogeneous component of $\Delta_a f_L$.
It follows that
$$
\omega^\theta_a=\theta_a^{[d+1]}=-(\Delta_a f_L)^{[d+1]}=-D_a f^{[d+2]}_L.
$$
We see that $\omega^\theta_a$ is exact on the lines, as claimed.
\end{proof}

If a polynomial 1-form $\omega$ is exact on the lines, then one can talk about
 the integral of $\omega$ over any 1-chain.
For an oriented interval $[a;b]$ parallel to a line $L\ni o$, set $\int_{[a;b]}\omega=F_L(b)-F_L(a)$,
 where $F_L$ is an $L$-antiderivative of $\omega$.
Clearly, this integral does not depend on the choice of $F_L$.

\begin{thm}
  \label{t:polyval-descr}
Any degree $\le d$ polynomial valuation $\mu$ has the form $\mu^{\bullet}+\mu^{\circ}$,
 where $\mu^{\bullet}(P)$ is the integral over $\d P$ of some degree $d+1$ homogeneous
 polynomial 1-form that is exact on the lines, and $\mu^{\circ}$ is the integral over $\d P$ of some degree $d$
 polynomial cochain.
\end{thm}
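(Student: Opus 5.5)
Starting from Theorem \ref{t:coc-deg}, we take the normalized cochain $\theta$ with $S\theta=\mu$. By Theorem \ref{t:normcoc-deg} and the proof of Theorem \ref{t:coc-deg}, it satisfies $\deg(\theta_a)\le d+1$ for all $a$ and $\deg(\theta_{a+b}-\theta_a-\theta_b)\le d$. We split each $\theta_a$ into homogeneous components and write $\theta_a=\theta_a^{[d+1]}+\theta_a^{\le d}$. As explained before Lemma \ref{l:ex-on-lines}, the top components $\omega^\theta(a,x)=\theta_a^{[d+1]}(x)$ are additive in $a$. Since $\theta_{\la a}^{[d+1]}$ should equal $\la\theta_a^{[d+1]}$ for rational $\la$, they should be $\Q$-linear in $a$, giving a degree $d+1$ homogeneous polynomial 1-form $\omega:=\omega^\theta$. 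For $\la\in\Z_{>0}$ this homogeneity follows from additivity by induction; negative and rational $\la$ follow from skew symmetry $\theta_{-a}(x)=-\theta_a(x-a)$ together with the fact that shifting the argument changes only lower-degree terms.

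The next step is exactness of $\omega$ on the lines. Here I would reuse the argument of Lemma \ref{l:ex-on-lines} directly rather than its hypothesis about homogeneous $\theta$. Take the normalized flag function $f$ with $\mu=\mu_f$. By Lemma \ref{l:degf}, $\deg(f_L)\le d+2$, and $\theta_a=-\Delta_a f_L$ for $a\in L$. Taking top-degree parts gives $\omega_a=-D_aF$ with $F=f_L^{[d+2]}$. Here I use that the degree $d+1$ component of $\Delta_a g$ for $\deg g\le d+2$ equals the degree $d+1$ component of $\Delta_a g^{[d+2]}$, since lower components of $g$ only contribute lower degrees. I also use that this top component of $\Delta_a g^{[d+2]}$ is the $\Q$-derivative $D_a g^{[d+2]}$, understood via polarization: the top term of $\Delta_a$ of a homogeneous degree $m$ $\Q$-polynomial is $m$ times its symmetric multilinear form evaluated at $(a,x,\dots,x)$. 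Thus $-f_L^{[d+2]}$ is an $L$-antiderivative of $\omega$, and we set $\mu^\bullet(P):=\int_{\d P}\omega$.

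Finally, define the cochain $\eta(x,y):=\theta(x,y)-\int_{[x;y]}\omega$. On each line parallel to $L$, the integral equals $F_L(y)-F_L(x)$ with $F_L=-f_L^{[d+2]}$, so $\eta$ satisfies the cochain condition. Its restriction to segments parallel to $L$ is $-\Delta_a(f_L-f_L^{[d+2]})$, which has degree $\le d$ in $x$ because $\deg(f_L-f_L^{[d+2]})\le d+1$. Hence $\eta_a$ has degree $\le d$ for every $a$, i.e. $\eta$ is a degree $d$ polynomial cochain. Setting $\mu^\circ:=S\eta$ gives $\mu=\mu^\bullet+\mu^\circ$ by linearity of the integral over $\d P$.

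The main obstacle I expect is the middle step: making precise that the top homogeneous component of $\Delta_a g$ equals $D_a g^{[d+2]}$ in the $\Q$-polynomial setting, and checking that the $L$-wise antiderivatives produce a well-defined integral independent of choices. Once the polarization identity for homogeneous $\Q$-polynomials is in hand (cf. \cite{Kho25}), the remaining steps are bookkeeping of degrees.
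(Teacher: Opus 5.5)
Your proposal is correct and follows essentially the same route as the paper. Both arguments take the normalized $\theta$, let $\omega_a=\theta_a^{[d+1]}$ be its top homogeneous part, prove exactness on the lines from $\theta_a=-\Delta_a f_L$ with $\deg(f_L)\le d+2$, and show that the remainder $\theta-\theta^\bullet$ (your $\eta$) has degree $\le d$. Your version is slightly more careful than the paper's in two places: you invoke Lemma \ref{l:degf} directly rather than Lemma \ref{l:ex-on-lines}, whose homogeneity hypothesis the normalized $\theta$ need not satisfy, and you write out $\eta_a=-\Delta_a(f_L-f_L^{[d+2]})$ explicitly.
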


If $\mu(P)$ depends on $P$ continuously in the sense of the Hausdorff metric,
 it can be shown then $\mu^{\bullet}(P)$ can be represented as the integral over $\d P$ of some
 degree $\le d+1$ polynomial 1-form over $\R$, hence, by the Stokes theorem,
 as the integral over $P$ of some degree $\le d$ polynomial 2-form.
Thus, we may think of $\mu^{\bullet}$ as an interior integral, and of $\mu^{\circ}$ as
 a boundary integral, even though, in practice, both $\mu^\bullet(P)$ and $\mu^\circ(P)$
 are computed by summing up certain numbers associated to the vertices of $P$.
One can further represent $\mu^{\circ}$ (with no continuity assumptions as above)
 as a (possibly, infinite) linear combination of $S\theta^L$, where $\theta^L$
 is a degree $d$ polynomial cochain with $\theta^L_{L'}=0$ for all $L'\ne L$.

\begin{proof}[Proof of Theorem \ref{t:polyval-descr}]
Assume that $L_0$, $L_1$ are fixed, and consider the normalized cochain $\theta$ with $\mu=S\theta$.
By Theorem \ref{t:coc-deg}, one has $\deg(\theta)\le d+1$, and $\deg(\theta_{a+b}-\theta_a-\theta_b)\le d$,
 for all $a$, $b\in\R^2$.
Let $x\mapsto \omega(a,x)$ be the degree $d+1$ homogeneous part of $\theta_a$;
 the polynomial 1-form $\omega$ is exact on the lines, by Lemma \ref{l:ex-on-lines}.
In particular, the integral of $\omega$ over $\d P$ makes sense.
Set $\mu^\bullet(P)$ to be this integral.
In fact, $\mu^{\bullet}(P)$ is the integral of some degree $\le d+1$ polynomial cochain $\theta^{\bullet}$ over $\d P$
 such that $\omega_a=D_a f_L$ for a polynomial function $f$ with $\theta^\bullet_a=-\Delta_a f$;
 cf. the proof of Lemma \ref{l:ex-on-lines}.

Cochains $\theta$ and $\theta^\bullet$ are, in general, different.
On the other hand, we claim that $\deg(\theta-\theta^\bullet)\le d$.
Indeed, $D_a f_L=\theta_a^{[d+1]}$ is the degree $d+1$ homogeneous part of $-\Delta_a f=\theta^\bullet_a$.
Setting $\theta^\circ:=\theta-\theta^\bullet$, one can write
 $\mu$ as $\mu^\bullet+\mu^\circ$, where $\mu^\bullet=S\theta^\bullet$ and $\mu^\circ=S\theta^\circ$.
Note also that $\theta^\bullet$ is obtained by integrating $\omega$ that is a degree $d+1$ homogeneous
 polynomial 1-form exact on the lines.
\end{proof}


\end{document}